\newtheorem{proposition}{Proposition}[section]
\newtheorem{definition}[proposition]{Definition}
\newtheorem{lemma}[proposition]{Lemma}
\newtheorem{theorem}[proposition]{Theorem}
\newtheorem{remark}[proposition]{Remark}
\begin{document}
\title{On the orbital diameter of classical groups in standard actions}

\author{Attila Mar\'oti}
\address{Hun-Ren Alfr\'ed R\'enyi Institute of Mathematics, Re\'altanoda utca 13-15, H-1053, Budapest, Hungary}
\email{maroti.attila@renyi.hu}

\author{Kamilla Rekv\'enyi} 
\address{Department of Mathematics, University of Manchester, Manchester, M13 9PL, United Kingdom. Also affiliated with: Heilbronn Institute for Mathematical Research, Bristol, BS8 1UG, United Kingdom.}
\email{kamilla.rekvenyi@manchester.ac.uk}

\keywords{finite classical group, orbital graph, primitive permutation group.}
\subjclass[2020]{20B15, 5E15, 51N30}
\thanks{The first author was supported by the National Research, Development and Innovation Office (NKFIH) Grant No.~K153681 and Grant No.~K138828.}
\maketitle

\begin{abstract}
Let $G$ be a primitive permutation group acting on a finite set $X$. The orbital diameter $\mathrm{diam}(X,G)$ is defined to be the supremum of the diameters of the (connected) orbital graphs of $G$ after disregarding the directions of all edges in the graphs. This invariant is studied in the case when $G$ is an almost simple group in a standard action. A lower bound is given for $\mathrm{diam}(X,G)$ and we provide a partial classification of pairs $(X,G)$ for which the orbital diameter is at most $2$.    
\end{abstract}

\section{Introduction}

Let $G$ be a permutation group acting on a set $X$. An {\it orbital} is an orbit of $G$ on $X \times X$. The {\it orbital graph} associated with an orbital $E$ is the directed graph with vertex set $X$ and edge set $E$. If $G$ is transitive and $E$ consists of loops, then the orbital graph is called {\it diagonal}. The criterion of Higman \cite{Higman} (see also \cite[Theorem 1.9]{Cameron}) states that a transitive permutation group is primitive if and only if all non-diagonal orbital graphs are connected. (A directed graph is said to be {\it connected} if the associated undirected graph is connected.) In this paper we suppose that $G$ is a finite transitive permutation group (and $X$ is a finite set). In this case a connected orbital graph is strongly connected (see \cite[Theorem 1.10]{Cameron}). (A directed graph is said to be {\it strongly connected} if every vertex may be reached from any other vertex along a path with directed edges.)    

In this paper an (undirected) orbital graph for $(X,G)$ is a graph with vertex set $X$ whose edge set is an orbit of $G$ on the collection of unordered $2$-element subsets of $X$. When $G$ is primitive on a finite set $X$, we shall write $\mathrm{diam}(X,G)$ for the supremum of the diameters of the undirected, non-diagonal orbital graphs for $(X,G)$ and call it the {\it orbital diameter} of $G$ acting on $X$. We will focus on the orbital diameter of classical groups in their standard actions.

Liebeck, Macpherson and Tent \cite{LMT} classified the infinite families of primitive permutation groups with bounded orbital diameter. These results and their proof methods were motivated by model theory and hence \cite{LMT} contains no explicit bounds. Since then, the orbital diameter has been extensively studied from a purely group theoretical standpoint. Some explicit bounds in the case of almost simple groups with alternating socles are provided by Sheikh \cite{Sheikh}. Such groups with orbital diameter at most $5$ are also described. Similarly, explicit bounds and descriptions of groups with small orbital diameters are given by the second author \cite{kamilla} for the case of primitive groups of simple diagonal type and in \cite{kamilla2} for primitive affine groups. For upper bounds in the affine case, see \cite{SKM1}, \cite{SKM2} and \cite{SK}.

For an integer $n \geq 2$ and a prime power $q$, let $\mathrm{Cl}_{n}(q)$ denote any of the groups $\mathrm{PSL}_{n}(q)$, $\mathrm{PSp}_{n}(q)$, $\mathrm{PSU}_{n}(q)$, $\mathrm{P\Omega}^{\pm}_{n}(q)$. Let $G$ be an almost simple primitive permutation group acting on a set $X$ of size $m$. Let the socle of $G$ be isomorphic to $\mathrm{A}_{n}$ or $\mathrm{Cl}_{n}(q)$. Let $t$ be a positive integer at most $n-1$. We now introduce the definition of a standard $t$-action taken from \cite[Section 1.2]{LMT}.   

\begin{definition}
\label{d1}	
Let $t$ be a positive integer and let $X$ be a finite set. Let $G$ be an almost simple primitive permutation group acting on $X$. Let the socle of $G$ be $G_{0}$ (a non-abelian simple group). We say that the group $G$ acting on $X$ has a {\it standard $t$-action} if any of the following holds.
	
	\begin{enumerate}
		
		\item[(a)] $G_{0} = \mathrm{A}_{n}$ and $X = I^{\{ t \}}$, the set of $t$-subsets of $I = \{ 1, \ldots, n \}$ with the natural action of $\mathrm{A}_n$.
		
		\item[(b)] $G_{0} = \mathrm{Cl}_{n}(q)$ and $X$ is an orbit of subspaces of dimension or codimension $t$ in the natural module $V_{n}(q)$; the subspaces are arbitrary if $G_{0} = \mathrm{PSL}_{n}(q)$, and otherwise are totally singular (here we denote $X$ by $\mathcal{S}_t$), non-degenerate (here we denote $X$ by $\mathcal{N}_t$), or, if $G_{0}$ is orthogonal and $q$ is even, are non-singular $1$-spaces (in which case $t=1$ and $X$ is denoted by $\mathcal{N}_1$). 
		
		\item[(c)] $G_{0} = \mathrm{PSL}_{n}(q)$, $G$ contains a graph automorphism of $G_{0}$, and $X$ is an orbit of pairs of subspaces $\{ U, W \}$ of $V = V_{n}(q)$, where either $U \subseteq W$ or $V = U \oplus W$, and $\dim U = t$, $\dim W = n-t$.
		
		\item[(d)] $G_{0} = \mathrm{Sp}_{n}(q)$, $q$ even, and a point stabilizer in $G_{0}$ is $\mathrm{O}^{\pm}_{n}(q)$ (here we take $t = 1$).
	\end{enumerate}
\end{definition}
\begin{remark}\rm
    Note that if $G_0$ is orthogonal and $t$ is even, then $\mathcal{N}_t$ has two types which we will denote by $\mathrm{O}_t^+$ and $\mathrm{O}_t^-.$ 
\end{remark}

Let $k = \min \{ t, \ n-t \}$. Note that in \cite[p. 229]{LMT} it is remarked that it is easy to establish the bound $\mathrm{diam}(X,G) \geq k$ in case $G_{0} = \mathrm{PSL}_{n}(q)$. In this paper we provide a proof for this fact. In fact we show that in case (b) for $G_{0} = \mathrm{PSL}_{n}(q),$ $\mathrm{diam}(X,G) =k.$ In \cite[p. 230]{LMT} it is proved that if $G_{0}$ is a classical group different from $\mathrm{PSL}_{n}(q)$ and $\mathrm{diam}(X,G)$ is bounded, then $k$ is bounded.

Our first main result is the following.

\begin{theorem}\label{thm1}
Let $G$ be a primitive permutation group acting on a finite set $X$ such that $G$ has a standard $t$-action for some positive integer $t$ as in (a)-(d) of Definition \ref{d1}. Put $k = \min \{ t, n-t \}$. If $(G_0,X)\neq (\mathrm{P\Omega}_n^{+}(q),\mathcal{S}_{n/2})$, then $\mathrm{diam}(X,G)\geq k$, otherwise $\mathrm{diam}(X,G) = \lfloor k/2\rfloor$. 
\end{theorem}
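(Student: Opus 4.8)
The plan is to reduce the computation of $\mathrm{diam}(X,G)$ to a single distance estimate in one carefully chosen orbital graph, controlled by a surrogate metric on subspaces. Fix a base point $\alpha\in X$. Since $G$ is transitive, every orbital graph is vertex-transitive, so its diameter equals the eccentricity of $\alpha$; hence $\mathrm{diam}(X,G)=\max_{\Gamma}\max_{\beta\in X}\mathrm{dist}_{\Gamma}(\alpha,\beta)$, where $\Gamma$ ranges over the non-diagonal orbital graphs. After replacing $t$ by $n-t$ if necessary (which leaves $k=\min\{t,n-t\}$ unchanged), I may assume every member of $X$ has the same dimension $t$, and I would work with $d(U,W)=t-\dim(U\cap W)$, the codimension of $U\cap W$ in $U$. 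The crucial structural point is that $\dim(\gamma\cap\gamma')$ is a $G$-invariant of the pair $(\gamma,\gamma')$, hence constant along the edges of any fixed orbital graph $\Gamma$; write $\delta_\Gamma=t-\dim(\gamma\cap\gamma')$ for that common value on the edges of $\Gamma$.

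The heart of the lower bound is the elementary estimate that along an edge of $\Gamma$ the function $d(\,\cdot\,,\beta)$ changes by at most $\delta_\Gamma$. Indeed, if $\dim(\gamma\cap\gamma')=t-\delta$, then intersecting the codimension-$\delta$ subspace $\gamma\cap\gamma'$ of $\gamma$ (and of $\gamma'$) with any fixed $Z$ shows that $\dim(\gamma\cap Z)$ and $\dim(\gamma'\cap Z)$ both lie within $\delta$ of $\dim(\gamma\cap\gamma'\cap Z)$, whence $|\dim(\gamma\cap Z)-\dim(\gamma'\cap Z)|\le\delta$. Telescoping along a shortest $\alpha$--$\beta$ path in $\Gamma$ and using $d(\beta,\beta)=0$ yields $\mathrm{dist}_\Gamma(\alpha,\beta)\ge d(\alpha,\beta)/\delta_\Gamma$. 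To get $\mathrm{diam}(X,G)\ge k$ I would therefore choose $\Gamma$ to be an orbital graph of minimal edge-drop $\delta_\Gamma=1$ and take $\beta$ \emph{opposite} to $\alpha$, meaning $\dim(\alpha\cap\beta)=\max\{0,2t-n\}$, so that $d(\alpha,\beta)=k$; connectedness of $\Gamma$ (Higman's criterion) supplies the path.

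What remains for the general lower bound is geometric existence: that $X$ admits an orbital of edge-drop $1$ together with an opposite pair. For case (a) the relevant graph is the Johnson graph and for case (b) with $G_0=\mathrm{PSL}_n(q)$ it is the Grassmann graph, both of diameter exactly $k$, recovering the stated equality $\mathrm{diam}(X,G)=k$ for $\mathrm{PSL}$. For $\mathcal S_t$ and $\mathcal N_t$ I would produce, from a hyperbolic (Witt) basis adapted to the form, a totally singular (resp.\ non-degenerate) $t$-space $\gamma'$ with $\dim(\alpha\cap\gamma')=t-1$ and a second one $\beta$ with $\dim(\alpha\cap\beta)=\max\{0,2t-n\}$, using Witt's theorem to confirm these are genuine $G$-orbits; here $t\le$ (Witt index) forces $k=t$ in the totally singular case. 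Case (d) is immediate since $k=1$, and in case (c) I would project a pair $\{U,W\}$ onto its $t$-dimensional member and run the same argument in that coordinate.

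The special case $(\mathrm{P\Omega}_n^{+}(q),\mathcal S_{n/2})$ is precisely where the edge-drop cannot be $1$. Writing $n=2m$, the two families of maximal totally singular $m$-spaces are distinguished by the parity of $\dim(U\cap W)$, so within the single orbit $X$ one has $\dim(U\cap W)\equiv m\pmod 2$; thus every orbital has even edge-drop, the minimal one being $\delta=2$, while the opposite $\beta$ has $d(\alpha,\beta)=m$ for $m$ even and $m-1$ for $m$ odd. The estimate above then gives $\mathrm{dist}_\Gamma(\alpha,\beta)\ge\lfloor m/2\rfloor=\lfloor k/2\rfloor$. For the matching upper bound I would identify $X$ with the vertex set of the halved dual polar graph of type $D_m$, whose diameter is known to be $\lfloor m/2\rfloor$, and check that the coarser orbital graphs (edge-drops $4,6,\dots$) contribute no larger diameter. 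The main obstacle is exactly this uniform geometric bookkeeping: verifying, across all classical types and all flavours ($\mathcal S_t$, $\mathcal N_t$, the non-singular $1$-spaces, and the paired action of (c)), that an edge-drop-$1$ orbital and an opposite configuration exist, and, in the orthogonal maximal case, pinning down the \emph{exact} diameter of the half-spin geometry for the upper bound rather than merely bounding it from below.
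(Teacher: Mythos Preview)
Your core mechanism---that on any orbital graph $\Gamma$ the function $U\mapsto\dim(U\cap\beta)$ is $\delta_\Gamma$-Lipschitz along edges, whence $\mathrm{dist}_\Gamma(\alpha,\beta)\ge d(\alpha,\beta)/\delta_\Gamma$---is exactly the content of the paper's key Lemma~3.1, and your programme of exhibiting an edge-drop-$1$ orbital together with an opposite pair via explicit hyperbolic bases and Witt's lemma is precisely what the paper carries out, case by case, in Section~3. So the strategy is essentially the same.

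There is, however, a genuine gap in your treatment of case~(c). Your plan is to ``project a pair $\{U,W\}$ onto its $t$-dimensional member and run the same argument in that coordinate''. This works when $t<n/2$, since then exactly one member of the unordered pair has dimension $t$; the paper does the same. But when $t=n/2$ both members have dimension $t$, the pair is unordered, and there is no projection to a single subspace coordinate. The paper requires a separate, non-trivial argument here: it defines $\dim(A,B)$ as the maximum of the four pairwise intersection dimensions between the members of $A$ and those of $B$, proves the analogue of the Lipschitz estimate for this quantity (Lemma~3.10), and then constructs an explicit opposite pair with $\dim(A,B)=0$ using an isomorphism $\alpha:U_1\to W_1$ and a fixed-point-free automorphism $\beta$ of $W_1$. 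Your proposal does not supply any of this, and the projection idea as stated simply does not make sense in this case.

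For the exact diameter in the exceptional case $(\mathrm{P\Omega}_n^{+}(q),\mathcal S_{n/2})$, your route via the halved dual polar graph is workable but leaves you, as you note, still needing to bound the diameters of the coarser orbitals (edge-drops $4,6,\dots$). The paper's argument for the upper bound is much cleaner: the permutation rank of this action is $\lfloor k/2\rfloor+1$ (cited from Sheikh's thesis), and since the distance function $d_\Gamma(\alpha,\cdot)$ in any orbital graph is $G_\alpha$-invariant, the diameter of every orbital graph is at most $(\text{rank})-1=\lfloor k/2\rfloor$. This handles all orbitals at once and avoids any appeal to the distance-regular structure of the half-spin geometry.
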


We now turn to the classification of primitive permutation groups $G$ acting on finite sets $X$ in standard $t$-actions for integers $t$ such that $\mathrm{diam}(X,G) \leq 2$. 

Let $G$ be a primitive permutation group acting on a finite set $X$ such that $\mathrm{diam}(X,G) = 1$. In this case $G$ is a $2$-homogeneous permutation group. It follows that $G$ is $2$-transitive or is of odd order. Let $G$ have a standard $t$-action for some positive integer $t$. Since $G$ has even order, $G$ must be a $2$-transitive permutation group. The finite $2$-transitive almost simple groups have been classified in \cite[Theorem 5.3]{Cameroncikk} (see also Note 2 after \cite[Theorem 5.3]{Cameroncikk}). The $2$-transitive groups $G$ in standard $t$-actions are the groups in (a) of Definition \ref{d1} with $t \in \{ 1, n-1 \}$, the groups in (b) of Definition \ref{d1} with $G_{0} = \mathrm{PSL}_{n}(q)$ and $t \in \{ 1, n-1 \}$, the groups in (b) of Definition \ref{d1} with $G_{0} = \mathrm{PSU}_{3}(q)$ ($q > 2$) and $X = \mathcal{S}_t$, and the groups in (d) of Definition \ref{d1} (in the latter case see also \cite[Theorem 2]{inglis}). 

Now let $G$ be a primitive permutation group acting on a finite set $X$ such that $\mathrm{diam}(X,G) = 2$. Suppose that $G$ has a standard $t$-action for $X$ for some positive integer $t$. The pairs $(X,G)$ satisfying (a) of Definition \ref{d1} are completely described in \cite[Theorem 1.3 (1)]{Sheikh}. The pair $(X,G)$ cannot satisfy (d) of Definition \ref{d1} by the previous paragraph. The pairs $(X,G)$ satisfying (b) of Definition \ref{d1} with $X = \mathcal{S}_t$ are completely described in \cite[Theorem 6.2.1 (1)]{Sheikhthesis}.

Thus in order to classify all primitive permutation groups $G$ acting on finite sets $X$ in standard $t$-actions for integers $t$ such that $\mathrm{diam}(X,G) \leq 2$, we may suppose that $\mathrm{diam}(X,G) = 2$ and that (b) or (c) of Definition \ref{d1} is satisfied with the assumption that $X \not= \mathcal{S}_t$ provided that (b) holds. 

Our second main theorem is a partial classification of primitive permutation groups $G$ acting on finite sets $X$ in standard $t$-actions for integers $t$ such that $\mathrm{diam}(X,G) \leq  2$. 
\begin{theorem}\label{thm2}
Let $G$ be a primitive permutation group acting on a finite set $X$ such that $G$ has a standard $t$-action. Let $k = \min \{ t, n-t \}$. Suppose that $n \geq 8$. Let $\mathrm{diam}(X,G) = 2$ and that (b) or (c) of Definition \ref{d1} is satisfied with the assumption that $X \not= \mathcal{S}_t$ provided that (b) holds. One of the following holds. 
\begin{enumerate}
    \item $(X,G)$ is as in (b) of Definition \ref{d1} and $k=1$. 
    \item $(X,G)$ is as in (b) of Definition \ref{d1} and $k=2$, moreover $G_0 = \mathrm{P\Omega}_n^\epsilon(q)$ with $q \equiv 3 \pmod 4$ and the elements of $X$ are of type $\mathrm{O}_2^-.$
\end{enumerate}
Conversely, if (1) holds, then $\mathrm{diam}(X,G) = 2$.
\end{theorem}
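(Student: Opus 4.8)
The plan is to combine the lower bound of Theorem~\ref{thm1} with a case-by-case determination of the orbital diameter. First I would restrict the range of $k$. Since in case (b) we assume $X\neq\mathcal{S}_t$ and in case (c) the socle is $\mathrm{PSL}_n(q)$, the pair $(G_0,X)$ is never the exceptional $(\mathrm{P\Omega}_n^+(q),\mathcal{S}_{n/2})$ of Theorem~\ref{thm1}; hence $\mathrm{diam}(X,G)\ge k$, and the hypothesis $\mathrm{diam}(X,G)=2$ forces $k\in\{1,2\}$. For $G_0=\mathrm{PSL}_n(q)$ in case (b) the equality $\mathrm{diam}(X,G)=k$ recorded after Theorem~\ref{thm1} already determines the diameter (with the $k=1$ subcase being $2$-transitive of diameter $1$), so the case (b) content of conclusions (1) and (2) concerns the form groups $\mathrm{PSp}_n(q)$, $\mathrm{PSU}_n(q)$, $\mathrm{P\Omega}_n^{\pm}(q)$ acting on $\mathcal{N}_t$ (or, in the orthogonal even case, the non-singular $1$-spaces $\mathcal{N}_1$). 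I also record two immediate reductions: $\mathrm{PSp}_n(q)$ has no non-degenerate $1$-space, so $k=1$ cannot occur symplectically; and since $U\mapsto U^{\perp}$ is a $G$-equivariant bijection between non-degenerate $t$- and $(n-t)$-spaces, I may assume $t=k$.

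Next I would eliminate case (c) for $n\ge 8$. Here $X$ is a $G$-orbit of unordered pairs $\{U,W\}$ with $U\subseteq W$ or $V=U\oplus W$ and $\{\dim U,\dim W\}=\{k,n-k\}$, $k\le 2$. Fixing a base pair, I would parametrize the suborbits of its stabilizer by the incidence data of two such pairs (the dimensions of the pairwise intersections $U_1\cap U_2$, $U_1\cap W_2$, $W_1\cap W_2$, together with the incidences forced by the graph automorphism), and then exhibit a single non-diagonal orbital graph containing two pairs at distance at least $3$. The room afforded by $n\ge 8$ makes such a witness routine to produce, and it gives $\mathrm{diam}(X,G)\ge 3$; thus case (c) never yields diameter $2$ in this range, and only case (b) with the form groups survives.

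For $k=1$ I would prove $\mathrm{diam}(X,G)=2$, establishing conclusion (1) and its converse. Fixing a base point $\langle v\rangle\in\mathcal{N}_1$, I would parametrize the suborbits of $G_{\langle v\rangle}$ by the natural invariant of a second non-degenerate (or non-singular) point $\langle w\rangle$: the value of the defining form on $(v,w)$ up to the relevant scalar action, equivalently the isometry type of $\langle v,w\rangle$ together with whether $v\perp w$. For $n\ge 8$ there is more than one non-diagonal suborbit, so the action is not $2$-transitive and $\mathrm{diam}(X,G)\ge 2$. For the matching upper bound I would show that each orbital graph has diameter at most $2$: given a target $\langle w\rangle$ and a prescribed relation, one solves for a common neighbour $\langle u\rangle$ of $\langle v\rangle$ and $\langle w\rangle$ lying in that relation to both. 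Since the non-degenerate space $\langle v,w\rangle^{\perp}$ has dimension at least $6$, it contains non-degenerate $1$-spaces realising every admissible value of the form, which lets one solve the two defining form-equations simultaneously; hence $\mathrm{diam}(X,G)=2$.

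The heart of the argument, and the main obstacle, is $k=2$, where $X=\mathcal{N}_2$ and $G_0$ is symplectic, unitary or orthogonal, and I must show that $\mathrm{diam}(X,G)=2$ forces $G_0$ orthogonal with $q\equiv 3\pmod 4$ and $X$ of type $\mathrm{O}_2^-$. Fixing a base $2$-space $U$, I would parametrize the suborbits of $G_U$ on non-degenerate $2$-spaces $W$ by $\dim(U\cap W)\in\{0,1\}$ together with the isometry types of $U\cap W$ (when $1$-dimensional) and of $U+W$, and then exhibit, in each of the symplectic, unitary and orthogonal $\mathrm{O}_2^+$ cases and in the orthogonal $\mathrm{O}_2^-$ case with $q\equiv 1\pmod 4$, a non-diagonal orbital graph with two $2$-spaces at distance at least $3$, forcing $\mathrm{diam}(X,G)\ge 3$. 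The obstruction is built from the isometry type of $U+W$: to pass from $U$ to a target $W$ in two steps through a prescribed relation one needs an intermediate non-degenerate $2$-space $Z$ forming the correct $4$-space type with both $U$ and $W$, and a square-class and counting argument shows no such $Z$ exists in these cases. The decisive point for $\mathrm{O}_2^-$ is that a perpendicular sum of two $\mathrm{O}_2^-$-planes is an $\mathrm{O}_4^+$-space, while whether the required intermediate minus-type planes exist turns on $-1$ being a non-square, i.e.\ on $q\equiv 3\pmod 4$. Thus the whole difficulty is concentrated in this orthogonal $k=2$ computation: the careful bookkeeping of square classes, of the isometry types of the $3$- and $4$-dimensional spaces $U+W$, and of the solvability of the associated quadratic form-equations, with $n\ge 8$ used precisely to keep every ``generic'' suborbit under control so that only the finitely many relations sensitive to $q \bmod 4$ remain to be analysed.
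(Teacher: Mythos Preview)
Your plan matches the paper's architecture: reduce to $k\le 2$ via Theorem~\ref{thm1}, eliminate case~(c) by exhibiting a diameter-$\ge 3$ orbital, prove $\mathrm{diam}=2$ for $k=1$ by parametrizing suborbits via form values and constructing common neighbours, and for $k=2$ rule out everything except orthogonal $\mathrm{O}_2^-$ with $q\equiv 3\pmod 4$.

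The one substantive difference is your $k=2$ obstruction. You propose a general square-class and counting analysis of the isometry types of $U+W$ and of the solvability of the attendant form equations. The paper's argument is much shorter and avoids all of this: it selects the orbital whose edges $\{U,U'\}$ meet in a \emph{singular} $1$-space (for the symplectic, unitary and $\mathrm{O}_2^+$ cases), takes $U''$ with $U\perp U''$ and $U\cap U''=0$, and observes that any common neighbour $W$ would be $\langle u,v\rangle$ with $u\in U$, $v\in U''$ both singular and mutually orthogonal, hence $W$ totally singular---contradiction. For $\mathrm{O}_2^-$ with $q\equiv 1\pmod 4$ the same device works with the edge-intersection a $1$-space of quadratic-form value $1$: a common neighbour $W=\langle u,w\rangle$ with $Q(u)=Q(w)=1$ and $f(u,w)=0$ contains the singular vector $u+\sigma w$ where $\sigma^2=-1$, so $W$ is not of minus type. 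Your identification of ``$-1$ a square'' as the pivot is exactly right; the paper just extracts it in one line rather than through isometry-type bookkeeping. One small caution on your $k=1$ sketch: for a non-zero orbital parameter the common neighbour $\langle u\rangle$ cannot lie in $\langle v,w\rangle^{\perp}$; what is needed (and what the paper does by explicit construction) is to place $u$ in an affine translate of that perp by a vector in $\langle v,w\rangle$ realising the prescribed form values, and then use the room in the perp to fix the norm.
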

\begin{remark}\rm
In part (2) of Theorem \ref{thm2} we have not been able to determine whether the orbital diameter is $2$. We
conjecture that it is at least $3$.
\end{remark}
\section{Background}

In this section we recall the background and notation which we will need.

Let us start with forms. Let $V$ be a finite vector space over a finite field $F$. Let $f$ be a map from $V \times V$ to $F$. The map $f$ is called {\it non-degenerate} if for each non-zero vector $v$ in $V$ the maps from $V$ to $F$ given by $x \to f(x,v)$ and $x \to f(v,x)$ are non-zero. A quadratic form $Q$ is called {\it non-degenerate} if its associated (symmetric) bilinear form $f$ is non-degenerate. The map $f$ is called {\it symmetric} if $f(u,v) = f(v,u)$ for all $u$, $v$ in $V$. The map $f$ is called {\it skew-symmetric} if $f(u,v) = - f(v,u)$ for all $u$, $v$ in $V$. If the characteristic of $F$ is odd and $f$ is skew-symmetric, then $f(v,v) = 0$ for all $v \in V$. The map $f$ is said to be {\it symplectic} if $f$ is skew-symmetric, bilinear, and $f(v,v) = 0$ for all $v \in V$. The map $f$ is said to be {\it unitary} if $F$ admits an involutary field automorphism $\alpha$ and $f$ is left-linear and $f(u,v) = f(v,u)^{\alpha}$ for all $u$, $v \in V$. 

Let $f$ be a bilinear or a sesquilinear form (or a unitary map) on $V$. Vectors $u$ and $v$ in $V$ are said to be {\it orthogonal} or {\it perpendicular} if $f(u,v) = 0$. For a subspace $U$ in $V$, let $U^{\perp}$ be the subspace of $V$ consisting of all vectors $v$ in $V$ such that $f(u,v) = 0$ for all $u \in U$. The form $f$ is called {\it non-singular} if $V^{\perp} = \{ 0 \}$ and {\it singular} otherwise. A subspace $U$ of $V$ is called {\it totally singular} if $f(u,v) = 0$ for all $u$, $v \in U$ and is called {\it non-degenerate} if $U \cap U^{\perp} = \{ 0 \}$.

Let $n$ denote the dimension of $V$ over $F$. Let $f$ be a non-degenerate unitary form on $V$. In this case $V$ admits an orthonormal basis (see \cite[Proposition 2.3.1]{KL}), that is, a basis $\{ e_{1}, \ldots , e_{n} \}$ such that $f(e_{i},e_{i}) = 1$ for all $i$ with $1 \leq i \leq n$ and $f(e_{i},e_{j}) = 0$ whenever $i$ and $j$ are different indices between $1$ and $n$. There is another useful basis called the {\it unitary basis} $\mathcal{B}$ for $V$. If $n = 2m$ is even, then $V$ has a basis $\{ e_{1}, \ldots , e_{m}, f_{1}, \ldots , f_{m}  \}$ and if $n = 2m+1$ is odd, then $V$ has a basis $\{ e_{1}, \ldots , e_{m}, f_{1}, \ldots , f_{m}, x  \}$ such that $f(u,v) = 0$ whenever $u$, $v \in \mathcal{B}$, except when $(u,v) = (x,x)$ or $\{ u, v \} = \{ e_{i}, f_{i} \}$ for some index $i$ with $1 \leq i \leq n$, in which case $f(u,v) = 1$. See \cite[Proposition 2.3.2]{KL}. Let $f$ be a non-degenerate symplectic form on $V$. In this case $n = 2m$ is even and $V$ admits a so-called {\it symplectic basis} $\mathcal{B} = \{ e_{1}, \ldots , e_{m}, f_{1}, \ldots , f_{m} \}$ such that $f(u,v) = 0$ for all $u$, $v \in \mathcal{B}$ unless $\{ u, v \} = \{ e_{i}, f_{i} \}$ for some $i$ with $1 \leq i \leq m$ in which case $f(e_{i},f_{i}) = - f(f_{i},e_{i}) = 1$. See \cite[Proposition 2.4.1]{KL}. Let $Q$ be a non-degenerate quadratic form on $V$. Let $f$ be the associated bilinear form. This is a non-degenerate symmetric bilinear form on $V$. In this case $V$ admits a {\it standard basis} $\mathcal{B}$ defined in the following way (see \cite[Proposition 2.5.3]{KL}). First let $n = 2m+1$ be odd. In this case $\mathcal{B} = \{ e_{1}, \ldots , e_{m}, f_{1}, \ldots , f_{m}, x \}$ such that $Q(e_{i}) = Q(f_{i}) = 0$ for all $i$ with $1 \leq i \leq m$, the vector $x$ is non-singular, and $f(u,v) = 0$ for all $u$, $v \in \mathcal{B}$ except when $\{ u, v \} = \{ e_{i}, f_{i} \}$ for some index $i$ with $1 \leq i \leq m$, in which case $f(u,v) = 1$. When $n = 2m$ is even, then there are two cases. In the first case the basis $\mathcal{B}$ is $\{ e_{1}, \ldots, e_{m}, f_{1}, \ldots , f_{m} \}$ such that $Q(e_{i}) = Q(f_{i}) = 0$ for all $i$ with $1 \leq i \leq m$ and $f(u,v) = 0$ for all $u$, $v \in \mathcal{B}$ unless $\{ u, v \} = \{ e_{i}, f_{i} \}$ for some $i$ in which case $f(u,v) = 1$. We will refer to $\langle e_{1}, \ldots, e_{m}, f_{1}, \ldots , f_{m} \rangle$ as type $\mathrm{O}_n^+.$ In the second case $\mathcal{B}$ has the form $\{ e_{1}, \ldots , e_{m-1}, f_{1}, \ldots , f_{m-1}, x, y \}$ with the following property. We have $Q(e_{i}) = Q(f_{i}) = 0$ for all $i$ with $1 \leq i \leq m$, $Q(x) = 1$, $Q(y) = \zeta$ where $\zeta$ is such that the polynomial $t^{2} + t + \zeta$ (in the variable $t$) is irreducible over $F$, and $f(u,v) = 0$ for all $u$, $v \in \mathcal{B}$ unless $\{ u, v \} = \{ e_{i}, f_{i} \}$ for some $i$ or $\{ u, v \} = \{ x, y \}$, in which case $f(u,v) = 1$.   We will refer to $\langle e_{1}, \ldots, e_{m-1}, f_{1}, \ldots , f_{m-1},x,y \rangle$ as type $\mathrm{O}_n^-.$






We recall Witt's Lemma from \cite[Proposition 2.1.6]{KL}.

\begin{theorem}[Witt's lemma]
\label{Witt}	
Assume that $(V_{1},\kappa_{1})$, $(V_{2},\kappa_{2})$ are two isometric classical geometries and that $W_{i}$ is a subspace of $V_{i}$ for $i \in \{ 1, 2 \}$. Further assume that there is an isometry $\varphi$ from $(W_{1},\kappa_{1})$ to $(W_{2},\kappa_{2})$. Then $\varphi$ extends to an isometry from $(V_{1},\kappa_{1})$ to $(V_{2},\kappa_{2})$. 
\end{theorem}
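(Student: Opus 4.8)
The plan is to deduce Witt's lemma from the existence of the distinguished bases recorded above, together with an induction on $\dim W_1$. First I would use the hypothesis that $(V_1,\kappa_1)$ and $(V_2,\kappa_2)$ are isometric to fix once and for all an isometry $\psi \colon V_1 \to V_2$ and replace $\varphi$ by $\psi^{-1}\circ\varphi$; this reduces the statement to the case $V_1 = V_2 = V$ with a single form $\kappa$, where one is given an isometry $\varphi\colon W_1 \to W_2$ between two subspaces of the fixed non-degenerate space $V$ and must extend it to an isometry of $V$. I would then argue by induction on $d = \dim W_1$, the case $d = 0$ being trivial.

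For the base case $d = 1$, write $W_1 = \langle w\rangle$ and $W_2 = \langle w'\rangle$ with $\varphi(w) = w'$, so that $w$ and $w'$ have equal values under the relevant form (and equal values of the quadratic form in the orthogonal case). If $w$ is non-singular I would produce an explicit isometry of $V$ carrying $w$ to $w'$: in the orthogonal or unitary setting this is a reflection in the hyperplane orthogonal to $w - w'$ or to $w + w'$ (replaced by a suitable orthogonal transvection in characteristic two), after checking that at least one of $w \pm w'$ is non-singular, which follows from the identity relating the form values of $w+w'$ and $w-w'$. If $w$ is singular, I would instead use non-degeneracy of $V$ to embed $w$ in a hyperbolic pair, do the same for $w'$, and map one hyperbolic plane isometrically onto the other sending $w$ to $w'$.

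For the inductive step $d \geq 2$ the idea is to peel off a one-dimensional piece and descend to its perpendicular space. Choosing $w \in W_1$ with $\varphi(w) = w'$, the base case supplies an isometry $g$ of $V$ with $g(w) = w'$; replacing $\varphi$ by $g^{-1}\circ\varphi$ I may assume $\varphi$ fixes $w$. When $w$ is non-singular, $V = \langle w\rangle \perp w^\perp$ with $w^\perp$ non-degenerate of dimension $\dim V - 1$, and since $\varphi$ fixes $w$ it restricts to an isometry between the $(d-1)$-dimensional subspaces $W_1 \cap w^\perp$ and $W_2 \cap w^\perp$ of $w^\perp$; the inductive hypothesis applied inside $w^\perp$ then yields the required extension, which I recombine with $g$ to recover an extension of the original $\varphi$.

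The main obstacle lies in the degenerate cases, above all when $W_1$ is totally singular --- which is the only possibility in the symplectic geometry and is exactly the situation of chief interest for the totally singular subspaces $\mathcal{S}_t$ occurring in Definition \ref{d1}. Here $w^\perp$ is itself degenerate, so the clean orthogonal decomposition $V = \langle w\rangle \perp w^\perp$ is unavailable. I expect to resolve this by enlarging the singular vector $w$ to a hyperbolic pair spanning a non-degenerate $2$-dimensional subspace $H$, mapping $H$ to the corresponding plane $H'$ for $w'$, and then running the induction on the non-degenerate space $H^\perp$ of dimension $\dim V - 2$; carrying the bookkeeping of these hyperbolic reductions through the three families of forms uniformly is the delicate point. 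Since the statement is classical, in practice I would simply cite \cite[Proposition 2.1.6]{KL}.
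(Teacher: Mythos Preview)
The paper gives no proof of this statement at all; it merely recalls Witt's lemma from \cite[Proposition 2.1.6]{KL} and uses it as a black box thereafter. Since your proposal ends by citing exactly the same reference, your approach coincides with the paper's; the inductive sketch you outline beforehand is extra (and broadly standard), but is not something the paper supplies or requires.
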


\section{Proof of Theorem \ref{thm1}}

In this section we prove Theorem \ref{thm1}. 

\subsection{Case (a)} Let $(G,X)$ satisfy (a) of Definition \ref{d1}. 
Recall that in case (a) $G_{0} = \mathrm{A}_{n}$ and $X = I^{\{ t \}}$, the set of $t$-subsets of $I = \{ 1, \ldots, n \}$ with the natural action of $\mathrm{A}_n$.  In this case 
we have that $\mathrm{diam}(X,G) = k$ by \cite[Theorem 1.1]{Sheikh} provided that $k < n/2$. Note that the fact that $k$ is a lower bound for $\mathrm{diam}(X,G)$ can be found in \cite[p. 229]{LMT}. 

\subsection{Case (d)} Let $(G,X)$ satisfy (d) of Definition \ref{d1}. Recall that in this case $G_{0} = \mathrm{Sp}_{n}(q)$, $q$ even, and a point stabilizer in $G_{0}$ is $\mathrm{O}^{\pm}_{n}(q).$ Here $\mathrm{diam}(X,G)\geq 1$ is obvious. 

In proving Theorem \ref{thm1} in cases (b) and (c) of Definition \ref{d1} we will need a lemma.  

Let $X$ be a set of vector spaces each of dimension $k$. Let $G$ be a primitive permutation group acting on $X$ in standard $t$-action. Let $U$, $U' \in X$ be in the same $G$-orbit such that $U' \not= U$ and $\dim(U \cap U')$ is maximal. Let $\mathcal{O}$ be the orbital graph containing the edge $\{ U,U' \}$. For two elements $A$ and $B$ in $X$ let $d(A,B)$ denote the distance between $A$ and $B$ in the graph $\mathcal{O}$.

\begin{lemma}\label{seged}
Let $\dim(U\cap U')=k-l$ with $l\geq 1$. If there exists $U''$ in $X$ such that $\dim(U\cap U'')=k-rl$ for some positive integer $r$, then $d(U,U'')\geq r$. In particular, if $\dim(U\cap U')=k-1$ and there is $U''$ such that $\dim(U\cap U'')=0$, then the diameter of $\mathcal{O}$ is at least $k$.  
\end{lemma}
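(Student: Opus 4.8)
The plan is to follow, along any path in $\mathcal{O}$, the single integer-valued quantity $\dim(U \cap W)$ as $W$ ranges over the vertices, and to show that it changes by at most $l$ across each edge. First I would record the elementary fact that every edge of $\mathcal{O}$ has the same ``gap'': if $\{ A, B \}$ is an edge then it lies in the $G$-orbit of $\{ U, U' \}$, and since $G$ acts by invertible linear maps it preserves dimensions of intersections, so $\dim(A \cap B) = \dim(U \cap U') = k - l$.

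The key step is a purely linear-algebraic estimate: for any subspace $W$ of the ambient space and any two $k$-dimensional subspaces $A$, $B$ with $\dim(A \cap B) = k - l$, one has $|\dim(W \cap A) - \dim(W \cap B)| \leq l$. To prove this I would set $C = A \cap B$ and route the comparison through $C$. Since $C \subseteq A$, the natural map $W \cap A \to A/C$ has kernel $W \cap A \cap C = W \cap C$, so $\dim(W \cap A) - \dim(W \cap C) \leq \dim(A/C) = l$; the same argument applied to $B$ gives $\dim(W \cap B) - \dim(W \cap C) \leq l$. As both differences are non-negative, $\dim(W \cap A)$ and $\dim(W \cap B)$ both lie in an interval of length $l$, which yields the estimate.

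With these two facts in hand the lemma follows by a telescoping argument. Take a geodesic $U = W_0, W_1, \dots, W_d = U''$ in $\mathcal{O}$ with $d = d(U, U'')$; such a path exists because $\mathcal{O}$ is connected by primitivity and Higman's criterion. Applying the estimate with $W = U$ to each edge $\{ W_i, W_{i+1} \}$ gives $|\dim(U \cap W_i) - \dim(U \cap W_{i+1})| \leq l$. Since $\dim(U \cap W_0) = k$ and $\dim(U \cap W_d) = k - rl$, the triangle inequality yields $rl = |\dim(U \cap W_0) - \dim(U \cap W_d)| \leq dl$, so $d \geq r$, as claimed. For the final assertion, taking $l = 1$ and $U''$ with $\dim(U \cap U'') = 0$ forces $r = k$, so $d(U, U'') \geq k$ and hence the diameter of $\mathcal{O}$ is at least $k$.

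I do not anticipate a serious obstacle; the only point requiring care is pinning down the exact constant $l$ in the linear-algebra estimate, so that the telescoping bound comes out as $r$ rather than something weaker. This is precisely why I would compare $W \cap A$ and $W \cap B$ through their common intersection $C = A \cap B$ rather than directly.
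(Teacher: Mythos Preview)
Your proof is correct and follows essentially the same idea as the paper: both arguments exploit that across any edge $\{A,B\}$ of $\mathcal{O}$ one has $\dim(A\cap B)=k-l$, and hence the intersection dimension with a fixed subspace can drop by at most $l$ per step. The paper packages this as an induction on $r$ using the inclusion--exclusion inequality $\dim(C)\geq \dim(A\cap C)+\dim(B\cap C)-\dim(A\cap B)$, whereas you isolate the same estimate as a one-line Lipschitz bound and then telescope along a geodesic; the underlying linear algebra is identical.
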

\begin{proof}
 Let $A,B\in X$. We claim that if $d(A,B)< r$, then $\dim (A\cap B)> k-rl$. Hence if $\dim(A\cap B)\leq k-rl$, then $d(A,B)\geq r$. We will prove the claim by induction on $r$. The base case $r=1$ is clear. For $r = 2$ we may suppose that $d(A,B) =1$ and so $\dim(A \cap B) = k-l > k-2l$.
 
Suppose that $r \geq 3$ and the claim is true for $r-1$, that is, $d(A,B)<r-1$ implies that $\dim (A\cap B)> k-(r-1)l$. We wish to prove it for $r$. For this let $A$ and $B$ be members of $X$ such that $d(A,B) = r-1$. The vector space $B$ has a neighbor $C$ such that $d(A,C) < r-1$. We have $\dim (B \cap C) = k - l$ and $\dim (A\cap C)> k-(r-1)l$ by the induction hypothesis. It follows that $$k=\dim (C) \geq  \dim ((A\cap C)+(B \cap C))\geq$$ $$\geq \dim(A\cap C)+\dim(B \cap C)-\dim (A \cap B) > 2k-rl-\dim (A \cap B),$$ giving $\dim (A \cap B) > k-rl$. 
\end{proof}

Now we prove Theorem \ref{thm1} for the cases (b) and (c) of Definition \ref{d1}.


\subsection{Case (b)} Let $(G,X)$ satisfy (b) of Definition \ref{d1}. Recall that in this case $G_{0} = \mathrm{Cl}_{n}(q)$ and $X$ is an orbit of subspaces of dimension or codimension $t$ in the natural module $V_{n}(q)$; the subspaces are arbitrary if $G_{0} = \mathrm{PSL}_{n}(q)$, and otherwise are totally singular, non-degenerate, or, if $G_{0}$ is orthogonal and $q$ is even, are non-singular $1$-spaces (in which case $t=1$). \par Note that for $k=1$ the bound immediately follows so we can assume that $t\geq 2.$ 
\par 
	 Let $U$ be a vector space in $X$. We may assume in all cases that $\dim(U) \leq n/2$. For $G_{0} = \mathrm{PSL}_{n}(q)$, this follows by the use of the inverse transpose automorphism. Let $G_{0} \not= \mathrm{PSL}_{n}(q)$. For $U$ totally singular, this follows by \cite[Corollary 2.1.7]{KL} and for $U$ non-degenerate, by \cite[Lemma 2.1.5 (ii), (iii), (v)]{KL}. 

\par We begin with the case when $G_0 = \mathrm{PSL}_n(q).$

\begin{lemma}
	\label{seged2}
 Let $G_{0} = \mathrm{PSL}_{n}(q)$ and $X$ the set of all subspaces of $V_{n}(q)$ of dimension $k$. In this case $\dim(U \cap U') = k-1$ and the diameter of $\mathcal{O}$ is $k$.
\end{lemma}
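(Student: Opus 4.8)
My plan is to establish the two claims separately: first the distance formula $\dim(U\cap U')=k-1$ for the orbital graph $\mathcal{O}$, and then the diameter equals $k$. Recall $U,U'$ are chosen in the same $G$-orbit with $U'\neq U$ and $\dim(U\cap U')$ \emph{maximal}. Since both $U$ and $U'$ are $k$-dimensional and distinct, we automatically have $\dim(U\cap U')\leq k-1$. So the first step is to exhibit a second $k$-space $U'$ actually achieving intersection dimension $k-1$. This is easy: fix a basis $\{v_1,\dots,v_k\}$ of $U$, extend it to a basis of $V_n(q)$ by adjoining $\{v_{k+1},\dots,v_n\}$ (note $n-k\geq k\geq 2$), and set $U'=\langle v_1,\dots,v_{k-1},v_{k+1}\rangle$. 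Then $U\cap U'=\langle v_1,\dots,v_{k-1}\rangle$ has dimension $k-1$, and since $\mathrm{PSL}_n(q)$ is transitive on ordered bases (hence on pairs of $k$-spaces meeting in a fixed dimension), $U'$ lies in the same $G$-orbit as $U$. Combined with the upper bound, maximality forces $\dim(U\cap U')=k-1$, and $\mathcal{O}$ is the orbital graph on pairs of $k$-spaces meeting in codimension $1$.

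For the diameter, the strategy is to invoke Lemma \ref{seged} directly. With $l=1$ (since $\dim(U\cap U')=k-1=k-l$), I need only produce a single $k$-space $U''\in X$ with $\dim(U\cap U'')=0$. Using the basis above, take $U''=\langle v_{k+1},\dots,v_{2k}\rangle$; this requires $2k\leq n$, which holds because $k\leq n/2$. Then $U\cap U''=\{0\}$, and Lemma \ref{seged} (the ``in particular'' clause) immediately gives that the diameter of $\mathcal{O}$ is at least $k$. The lower bound $\mathrm{diam}(\mathcal{O})\geq k$ is thus settled cleanly.

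It remains to prove the matching upper bound $\mathrm{diam}(\mathcal{O})\leq k$: any two $k$-spaces $A,B$ can be joined by a path of length at most $k$ in which each step drops the intersection-codimension with the target by exactly one. The argument is a walk toward $B$. If $\dim(A\cap B)=k-j$ with $1\leq j\leq k$, I claim there is a neighbor $A_1$ of $A$ in $\mathcal{O}$ with $\dim(A_1\cap B)=k-(j-1)$; iterating, after at most $k$ steps we reach $B$ itself. To construct $A_1$: choose a basis of $A$ adapted to the flag $A\cap B\subseteq A$, pick a vector $b\in B\setminus A$ (possible since $A\neq B$ when $j\geq 1$), and replace one basis vector of $A$ lying outside $A\cap B$ by $b$, yielding a $k$-space $A_1$ with $\dim(A\cap A_1)=k-1$ (so $\{A,A_1\}$ is an edge of $\mathcal{O}$) and $\dim(A_1\cap B)=\dim(A\cap B)+1=k-(j-1)$.

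The step I expect to be the main obstacle is this upper-bound construction: I must verify that the new vector $b$ genuinely raises the intersection with $B$ while dropping the intersection with $A$ by exactly one, and that $A_1$ is a \emph{bona fide} element of $X$ in the correct orbital $\mathcal{O}$ (i.e.\ $\dim(A\cap A_1)=k-1$ rather than something smaller). Both are elementary linear algebra once the basis is chosen carefully, but the bookkeeping of which basis vectors to swap requires care to ensure $b\notin A$ and that the swapped-out vector was not already in $A\cap B$. Since $\mathrm{PSL}_n(q)$ acts transitively on pairs of $k$-spaces of each fixed intersection dimension, there is a unique non-diagonal orbital graph whose edges are the codimension-$1$-intersecting pairs, so $\mathcal{O}$ is well-defined and the two bounds together give $\mathrm{diam}(\mathcal{O})=k$.
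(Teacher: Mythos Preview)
Your argument is correct and follows essentially the same route as the paper: both establish the upper bound by swapping basis vectors one at a time to walk from $A$ toward $B$, and both obtain the lower bound by the dimension-counting inequality. The only organizational difference is that the paper packages both directions into a single induction proving the exact formula $d(A,B)=k-\dim(A\cap B)$, whereas you invoke Lemma~\ref{seged} directly for the lower bound and then construct the path separately; the content is the same.

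One small correction: $\mathrm{PSL}_n(q)$ is \emph{not} transitive on ordered bases (that is $\mathrm{GL}_n(q)$). What you actually need, and what does hold, is that $G_0$ is transitive on unordered pairs of $k$-spaces meeting in dimension $k-1$; since $k\le n/2$ there is always a basis vector outside $A+A_1$ that can be rescaled to force determinant $1$, so the required element lies in $\mathrm{SL}_n(q)$ and hence in $G_0$. With that fix your justification that $\{A,A_1\}$ is an edge of $\mathcal{O}$ is complete.
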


\begin{proof}
	We will prove that for $G_{0} = \mathrm{PSL}_{n}(q)$ and $X$ the set of all subspaces of $V_{n}(q)$ of dimension $k$ we have $\dim (A \cap B) = r$ if and only if $d(A,B) = k-r$ for any $A$, $B$ in $X$. We proceed by induction on $r$. This is clear for $r = k$ and it is also clear by transitivity for $r = k-1$. Assume that $r \leq k-2$ and that the claim is true for larger values of $r$.
	
	Let $\dim(A \cap B) = r$. We have $d(A,B) \geq k-r$ by induction. Let $\{ e_{1}, \ldots , e_{k}\}$ be a basis for $A$ and $\{ e_{1}, \ldots, e_{r}, f_{r+1}, \ldots , f_{k} \}$ be a basis for $B$. For $i$ with $1 \leq i \leq k-r$ put $A_{i} = \langle e_{1}, \ldots , e_{r}, e_{r+1}, \ldots , f_{k - (i-1)}, \ldots , f_{k} \rangle$. It is clear that $$A, A_{1}, \ldots , A_{k-r} = B$$ is a path in $\mathcal{O}$ of length $k-r$. Thus $d(A,B) \leq k-r$ and so $d(A,B) = k-r$.
	
	Assume that $d(A,B) = k-r$. We have $\dim(A \cap B) \leq r$ by induction. There exists a vector space $C$ in $X$ with $d(A,C) = k-r-1$ and $d(C,B) = 1$. We have $\dim (A \cap C) = r+1$ and $\dim (B \cap C) = k-1$ by induction. Now
	$$k = \dim (C) \geq \dim ((A \cap C) + (B \cap C)) = \dim (A \cap C) + \dim (B \cap C) - \dim(A \cap B \cap C) =$$ $$= k+r - \dim(A \cap B \cap C).$$
	It follows that $r \geq \dim(A \cap B) \geq \dim(A \cap B \cap C) \geq r$.
\end{proof}

In fact, if $G_{0} = \mathrm{PSL}_{n}(q)$ and $X$ is the set of all $k$-dimensional subspaces of $V_{n}(q)$, then $\mathrm{diam}(X,G) = k$, provided that $k < n/2$. 

\begin{lemma}
\label{pslb}
If $G_{0} = \mathrm{PSL}_{n}(q)$ and $X$ is the set of all $k$-dimensional subspaces of $V_{n}(q)$, then $\mathrm{diam}(X,G) = k$, provided that $k < n/2$.
\end{lemma}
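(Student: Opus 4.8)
The orbital graphs for $(X,G)$ correspond to the orbits of $G$ on unordered pairs of distinct $k$-subspaces. The plan is first to note that, since every automorphism of $\mathrm{PSL}_n(q)$ acting on the $k$-subspaces preserves intersection dimensions (the graph automorphism does not act on $X$ as $k \neq n-k$) and since $G_0 = \mathrm{PSL}_n(q)$ is already transitive on pairs of $k$-subspaces with a fixed intersection dimension (choose adapted bases, or apply Theorem \ref{Witt} with the zero form), the orbit of $\{A,B\}$ is determined by $\dim(A \cap B)$. Because $k < n/2$ we have $2k < n$, so for every $r$ with $0 \le r \le k-1$ there exist distinct $k$-subspaces meeting in dimension $r$, and these exhaust the possibilities. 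Hence the non-diagonal orbital graphs are exactly the graphs $\mathcal{O}_r$, $0 \le r \le k-1$, whose edges join two $k$-subspaces meeting in dimension exactly $r$, and $\mathrm{diam}(X,G) = \max_{0 \le r \le k-1} \mathrm{diam}(\mathcal{O}_r)$. It therefore suffices to prove that every $\mathcal{O}_r$ has diameter at most $k$ and that some $\mathcal{O}_r$ attains $k$.

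The value $k$ is attained by $\mathcal{O}_{k-1}$: this is precisely the graph treated in Lemma \ref{seged2}, whose edges realise the maximal intersection $k-1$ and whose diameter equals $k$, since $2k < n$ allows a pair of disjoint $k$-subspaces. Thus $\mathrm{diam}(X,G) \ge k$, and the remaining work is the upper bound $\mathrm{diam}(\mathcal{O}_r) \le k$ for each $r$. Fix $r$ and set $l = k - r$, so that an edge of $\mathcal{O}_r$ replaces a $k$-subspace by one meeting it in dimension $r$, that is, it rotates exactly $l$ directions. Given $A, B$ with $\dim(A \cap B) = d$, I would distinguish according to whether $d \le r$ or $d > r$. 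When $d \le r$, I would grow the intersection with $B$ step by step: from a current subspace $C$ with $\dim(C \cap B) = c \le r$, keep $C \cap B$ together with $r - c$ further directions of $C$ as the common part $C \cap C'$ (of dimension $r$), and take the $l$ new directions of $C'$ inside $B$. Since $\dim B - \dim(C \cap B) = k - c \ge l$, these directions exist inside $B$ itself, so no room outside is needed; this produces an edge of $\mathcal{O}_r$ with $\dim(C' \cap B) = c + l$, and by increasing less one may land exactly on $r$. Reaching a subspace meeting $B$ in dimension $r$ and then stepping to $B$ gives a path of length at most $\lceil k/l\rceil \le k$.

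When $r < d < k$ the subspaces are ``too close'' for a single $\mathcal{O}_r$-step to terminate at $B$, and here I would insert a length-$2$ detour $A \to C \to B$ through a subspace $C$ with $\dim(A \cap C) = \dim(B \cap C) = r$; as $l \ge 2$ here forces $k \ge 2$, this distance $2$ is at most $k$. The crux, and the step I expect to be the main obstacle, is the existence of these intermediate subspaces realising the prescribed intersection dimensions. Concretely, in the extreme case $r = 0$ one needs a $k$-subspace meeting each of $A$ and $B$ trivially, which exists precisely because $k < n/2$ forces $2k < n$; the general case is handled by a dimension count that builds $C$ one direction at a time, at each stage adjoining a vector outside $A \cup B$ — for instance of the form $a + b$ with $a \in A \setminus (A \cap B)$ and $b \in B \setminus (A \cap B)$, which lies in neither $A$ nor $B$ — together with Theorem \ref{Witt}. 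Granting this existence, $\mathrm{diam}(\mathcal{O}_r) \le k$ for every $r$, while $\mathcal{O}_{k-1}$ attains the value $k$, so $\mathrm{diam}(X,G) = k$.
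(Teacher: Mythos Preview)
Your argument is essentially correct, though the existence of the intermediate subspace $C$ in both the ``modified last step'' (to land exactly at intersection $r$) and the length-$2$ detour for $r<d<k$ is stated rather sketchily. The clean way to fill this in is the standard one: having fixed the $r$-dimensional piece $R\subseteq A\cap B$ you want inside $C$, build the remaining $k-r$ directions of $C$ one at a time, at each stage choosing a vector outside both $A+C_i$ and $B+C_i$; since $\dim(A+C_i)=k+i-r\le 2k-1<n$ and likewise for $B+C_i$, these are two proper subspaces, and a vector space over any field is not the union of two proper subspaces. With that detail supplied, your bound $\lceil k/l\rceil$ in the $d\le r$ case and the length-$2$ path in the $d>r$ case go through, and the proof is complete.

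The paper's proof is quite different and considerably shorter. Rather than analysing each $\mathcal{O}_r$ directly, it observes that any two $k$-subspaces can simultaneously be given bases that are subsets of a single basis $\mathcal{B}$ of $V_n(q)$ (because $\dim(W+W')\le 2k\le n$), and then uses the embedded copy of $\mathrm{A}_n$ permuting $\mathcal{B}$. The $\mathrm{A}_n$-orbital on $k$-subsets of $\mathcal{B}$ determined by $|\mathcal{W}\cap\mathcal{W}'|$ sits inside the $G$-orbital $\mathcal{G}$ (same intersection dimension), so a path of length at most $k$ in the former---guaranteed by Sheikh's result \cite[Proposition~3.1]{Sheikh} that the orbital diameter of $\mathrm{A}_n$ on $k$-subsets is $k$ for $k<n/2$---yields a path in $\mathcal{G}$. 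This buys brevity at the cost of importing Sheikh's theorem; your approach is self-contained and in effect re-proves the needed combinatorics inside the Grassmannian, which is more work but avoids the external dependence.
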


\begin{proof}
The lower bound $\mathrm{diam}(X,G) \geq k$ follows from Lemma \ref{seged2}. The upper bound can be seen as follows. Let $\mathcal{G}$ be an arbitrary (non-diagonal) orbital graph containing the edge $\{ W, W' \}$. Let $\mathcal{B} = \{ v_{1}, \ldots , v_{n} \}$ be a basis for $V_{n}(q)$. Since $k \leq n/2$, by applying an element of $G_{0}$ to $W$ and $W'$ if necessary, we may assume that both $W$ and $W'$ have a basis, say $\mathcal{W}$ and $\mathcal{W'}$ respectively, which are subsets of $\mathcal{B}$. Consider the natural action of $\mathrm{A}_n$ on $\mathcal{B}$. (This group may be viewed as a subgroup of $G_0$.) This defines an action of $\mathrm{A}_n$ on the set of $k$-element subsets of $\mathcal{B}$. Consider the orbital containing the edge $\{\mathcal{W}, \mathcal{W'}  \}$. This has diameter $k$ by \cite[Proposition 3.1]{Sheikh}, provided that $k < n/2$. Now let $U$ and $U'$ be arbitrary vertices in $X$ of maximal distance apart in $\mathcal{G}$. By applying an element of $G_0$ if necessary, it may be assumed that the bases $\mathcal{U}$ and $\mathcal{U'}$ of $U$ and $U'$ respectively are subsets of $\mathcal{B}$. Since $\mathrm{diam}(\mathcal{B},\mathrm{A}_n) = k$, provided that $k < n/2$, the distance between  
$\mathcal{U}$ and $\mathcal{U'}$ and thus the distance between $U$ and $U'$ in $\mathcal{G}$ is at most $k$, provided that $k < n/2$. 
\end{proof} 
The corresponding exact diameter for classical groups with socle a simple symplectic group is not known. See \cite[Conjecture 6.2.3]{Sheikhthesis}.   

Let $\mathcal{S}$ be the set of all totally singular or non-degenerate subspaces of $V_{n}(q)$ which are isomorphic to $U$.

\begin{lemma}
	\label{orbit}
	The orbit $X$ is equal to $\mathcal{S}$, unless possibly if $n$ is even and any of the following holds:
	\begin{enumerate}
		\item $G_{0} = \mathrm{P \Omega}^{+}_{n}(q)$ and $U$ is totally singular of dimension $n/2$;
		
		\item $G_{0} = \mathrm{P \Omega}^{\pm}_{n}(q)$, $q$ is odd and $U$ is non-degenerate with $\dim U$ odd.
	\end{enumerate}
	In any case $\mathcal{S}$ is the union of at most two $G$-orbits.
\end{lemma}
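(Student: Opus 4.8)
The plan is to transfer the question from $G$ to the full isometry group of the form and then count orbits by a determinant/spinor-norm calculation. Write $I=I(V)$ for the full isometry group underlying $G_0$ (so $I$ is $\mathrm{Sp}_n(q)$, $\mathrm{GU}_n(q)$ or $\mathrm{O}_n^{\pm}(q)$) and let $H\trianglelefteq I$ be the subgroup $\mathrm{Sp}_n(q)$, $\mathrm{SU}_n(q)$ or $\Omega_n^{\pm}(q)$ whose projective image is $G_0$. Since the centre acts trivially on subspaces, the $H$-orbits on subspaces coincide with the $G_0$-orbits, and every $G$-orbit is a union of $G_0$-orbits; so it suffices to count $H$-orbits. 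By Witt's Lemma (Theorem \ref{Witt}) the group $I$ is transitive on each isometry class of subspaces. Because $H$ is normal in $I$, for a subspace $U$ lying in a single $I$-orbit the number of $H$-orbits inside that orbit equals the index $[I:H\cdot \mathrm{Stab}_I(U)]$, which is the index of the image of $\mathrm{Stab}_I(U)$ in the abelian quotient $I/H$. Thus the problem reduces to deciding (i) when $\mathcal{S}$ is a single isometry class, and (ii) the image of the subspace stabiliser in $I/H$.

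First I would dispose of the non-orthogonal cases. For $G_0$ symplectic, $H=I=\mathrm{Sp}_n(q)$, so $I/H$ is trivial and $\mathcal{S}$ is a single orbit. For $G_0$ unitary, $I/H=\mathrm{GU}_n(q)/\mathrm{SU}_n(q)$ is cyclic via the determinant; the Hermitian form has a unique isometry type in each dimension, so $\mathcal{S}$ is a single isometry class, and the stabiliser of $U$ (a Levi $\mathrm{GL}$ acting on $U$ in the totally singular case, or $\mathrm{GU}(U)\times\mathrm{GU}(U^{\perp})$ in the non-degenerate case) already realises every determinant. Hence $\mathcal{S}$ is a single $H$-orbit and $X=\mathcal{S}$.

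The heart of the argument is the orthogonal case $I=\mathrm{O}_n^{\pm}(q)$, $H=\Omega_n^{\pm}(q)$, where $I/H$ is the Klein four group generated by the determinant and the spinor norm when $q$ is odd (and has order $2$, detected by the Dickson invariant, when $q$ is even). For $U$ non-degenerate of dimension $k\ge 2$ the stabiliser is $\mathrm{O}(U)\times\mathrm{O}(U^{\perp})$, and already $\mathrm{O}(U)$ surjects onto $I/H$: a non-degenerate space of dimension at least $2$ represents every non-zero scalar and so contains non-singular vectors in both discriminant classes, whence its reflections realise determinant $-1$ with either spinor norm, and products of two reflections of vectors in different classes realise determinant $+1$ with non-trivial spinor norm (for $q$ even one uses orthogonal transvections and the Dickson invariant in place of reflections and the spinor norm). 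Thus each isometry class of non-degenerate subspaces is a single $H$-orbit. The only way more than one orbit survives is if the datum ``non-degenerate of dimension $k$'' fails to pin down one isometry class: for $q$ odd and $k$ odd the two discriminant classes are not separated by the $\mathrm{O}_k^{\pm}$ labelling and give two distinct $I$-orbits, fused only by a similarity. This is exactly exception (2), and since each class is a single $H$-orbit there are precisely two.

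For $U$ totally singular of dimension $k\le n/2$ the stabiliser is the parabolic with Levi factor $\mathrm{GL}_k(q)\times\mathrm{O}_{n-2k}^{\epsilon}(q)$ and unipotent radical lying in $\Omega$. When $k<n/2$ the orthogonal factor is non-trivial and, together with the Levi $\mathrm{GL}_k(q)$ (whose elements $h$ contribute determinant $+1$ and spinor norm $\det(h)\bmod (F^{*})^2$), it surjects onto $I/H$; hence $\mathcal{S}$ is a single $H$-orbit and $X=\mathcal{S}$. When $k=n/2$ — which forces $G_0=\mathrm{P}\Omega_n^{+}(q)$ — the orthogonal factor disappears, the stabiliser image is an index-$2$ subgroup of $I/H$, and $\mathcal{S}$ splits into exactly two $H$-orbits (the two families of maximal totally singular subspaces), interchanged by an element of $I\setminus H$; this is exception (1). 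In every case the number of $H$-orbits, hence of $G$-orbits, is at most two. The main obstacle throughout is the spinor-norm bookkeeping in the orthogonal case: verifying that the $\mathrm{GL}_k(q)$ Levi contributes spinor norm $\det(h)\bmod (F^{*})^2$ with trivial determinant, and that non-degenerate spaces of dimension at least $2$ carry non-singular vectors of both discriminant classes. These are standard facts (see \cite{KL}) flowing from the reflection description of the spinor norm, and they drive each case.
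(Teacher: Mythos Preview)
Your argument is correct and is precisely the content lying behind the paper's proof, which simply cites Witt's lemma together with Propositions~4.1.3, 4.1.4, 4.1.6, 4.1.18--4.1.20 of \cite{KL}; those propositions record exactly the stabiliser structures and the index computations in $I/H$ (via determinant, spinor norm, or Dickson invariant) that you have written out explicitly. The one place worth a sentence of extra care is exception~(2): your remark that the two discriminant classes of an odd-dimensional non-degenerate $U$ are ``not separated by the $\mathrm{O}_k^{\pm}$ labelling'' should be supplemented by the observation that when $n$ is odd the complement $U^{\perp}$ is even-dimensional and \emph{does} carry an $\mathrm{O}^{\pm}$ type which distinguishes the two classes, so that $\mathcal{S}$ is a single isometry class in that case---this is why the exception is confined to $n$ even.
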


\begin{proof}
	This follows from Witt's lemma (see Theorem \ref{Witt}) and Propositions 4.1.3, 4.1.4, 4.1.6, 4.1.18, 4.1.19, 4.1.20 of \cite{KL}. For the definition of $c$ in the statements of these propositions see \cite[Section 3.2]{KL}.
\end{proof}

\begin{lemma}
	\label{k/2}
	If (1) of Lemma \ref{orbit} holds, then $\mathrm{diam}(X,G)=[k/2].$
\end{lemma}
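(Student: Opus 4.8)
The plan is to analyse the orbital graphs on a single family $\mathcal{F}$ of maximal totally singular subspaces of $V = V_n(q)$, where $n = 2m$ and $k = m$. The starting point is the classical structural fact that two maximal totally singular subspaces $U$, $U'$ of the $+$-type space lie in the same family (one of the two $G$-orbits in Lemma \ref{orbit}) if and only if $m - \dim(U \cap U')$ is even. Granting this, for distinct $U$, $U'$ in $\mathcal{F}$ the largest possible value of $\dim(U \cap U')$ is $m - 2$, and the non-diagonal orbital graphs on $X = \mathcal{F}$ are indexed by $j \in \{1, \ldots, \lfloor m/2\rfloor\}$, where $\mathcal{O}_j$ has edge set the pairs with $\dim(U \cap U') = m - 2j$; by Witt's lemma (Theorem \ref{Witt}) the intersection dimension is a complete invariant of such a pair. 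In particular, in the notation of Lemma \ref{seged} the relevant step is $l = 2$ rather than $l = 1$, which is exactly what produces the factor $1/2$.

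For the lower bound I would apply Lemma \ref{seged} to the graph $\mathcal{O} = \mathcal{O}_1$ of maximal intersection, so that $\dim(U \cap U') = m - 2 = k - 2$ and $l = 2$. Choosing a standard basis $e_1, \ldots, e_m, f_1, \ldots, f_m$ of the $+$-type form with $U = \langle e_1, \ldots, e_m\rangle$, I would exhibit a subspace $U'' \in \mathcal{F}$ of minimal intersection with $U$: namely $U'' = \langle f_1, \ldots, f_m\rangle$ (intersection $0$) when $m$ is even, and $U'' = \langle e_1, f_2, \ldots, f_m\rangle$ (intersection $1$) when $m$ is odd; in each case the intersection has the parity of $m$, so $U''$ lies in $\mathcal{F}$. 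Writing $\dim(U \cap U'') = m - 2r$ with $r = \lfloor m/2\rfloor$, Lemma \ref{seged} gives $d(U, U'') \geq r = \lfloor m/2\rfloor$, whence $\mathrm{diam}(X,G) \geq \lfloor k/2\rfloor$.

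For the upper bound I would bound the diameter of every $\mathcal{O}_j$ by constructing short paths. Given $U$, $W \in \mathcal{F}$ with $\dim(U \cap W) = m - 2d$, set $R = U \cap W$; since $U$ and $W$ are totally singular with $U + W \subseteq R^\perp$ and a dimension count gives $R^\perp = U + W$, the quotient $R^\perp / R$ is a non-degenerate $+$-type space of dimension $4d$ in which $\bar U = U/R$ and $\bar W = W/R$ are complementary maximal totally singular subspaces. Working in a hyperbolic basis of this quotient and swapping $2j$ hyperbolic vectors at a time, I would produce intermediate maximal totally singular subspaces $U = U_0, U_1, \ldots$ with $\dim(U_i \cap U_{i+1}) = m - 2j$ (so each step is an edge of $\mathcal{O}_j$) while increasing $\dim(U_i \cap W)$ by $2j$ at each step. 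Counting steps gives $d_{\mathcal{O}_j}(U, W) \leq \lceil d/j\rceil \leq \lceil \lfloor m/2\rfloor / j\rceil \leq \lfloor m/2\rfloor$, and the same construction with $j = 1$, combined with the lower bound above, shows $\mathrm{diam}(\mathcal{O}_1) = \lfloor m/2\rfloor$ exactly. Taking the supremum over $j$ yields $\mathrm{diam}(X,G) = \lfloor k/2\rfloor$.

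The main obstacle I anticipate is the bookkeeping that keeps every intermediate subspace inside the single family $\mathcal{F}$ (equivalently, that each constructed step changes the relevant intersection dimension by an even amount, preserving the parity invariant) and the verification that the quotient $R^\perp/R$ really is of $+$-type with $\bar U$, $\bar W$ complementary; both rest on the parity characterisation of families and on repeated use of Witt's lemma, together with the propositions of \cite{KL} cited in Lemma \ref{orbit}. The only genuinely delicate point in the path construction is the final partial step when $j \nmid d$, which must still be a bona fide edge of $\mathcal{O}_j$; this is handled by routing the path through a subspace meeting $W$ in codimension exactly $2j$ before taking the last step onto $W$.
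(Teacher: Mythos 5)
Your lower bound is essentially the paper's own argument: the paper also uses the parity characterisation of the two families of maximal totally singular subspaces (Description 4 on p.~30 of \cite{KL}) to exhibit $W, W', W''$ in one family with $\dim(W\cap W'')=k-2$ and $\dim(W\cap W')\in\{0,1\}$, and then applies Lemma \ref{seged} with $l=2$ (the paper routes this through the orbital graph of $\mathrm{PSL}_{n}(q)$ containing $\{W,W''\}$ and compares distances, while you apply Lemma \ref{seged} directly to the $G$-orbital graph $\mathcal{O}_1$; both are valid, and yours is arguably cleaner). Where you genuinely diverge is the upper bound: the paper does not construct paths at all, but cites \cite[Lemma 6.4.2.(3)]{Sheikhthesis}, which states that the rank of this action is $\lfloor k/2\rfloor+1$, and then uses the standard fact that the diameter of a connected orbital graph is at most the rank minus one. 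Your explicit path construction through the quotient $R^{\perp}/R$ is self-contained and yields more refined information (distance bounds in each graph $\mathcal{O}_j$ separately), at the cost of substantially more bookkeeping; the paper's argument is a one-line citation.

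Two soft spots in your upper bound deserve attention. First, the claim that the non-diagonal orbitals are exactly the intersection-dimension classes $\mathcal{O}_j$ does not follow from Witt's lemma alone: Witt gives transitivity of the \emph{full} orthogonal group on pairs of maximal totally singular subspaces with prescribed intersection dimension, but $G$ may be as small as a group with socle $\mathrm{P\Omega}^{+}_{n}(q)$, and (at least for $q$ odd) one must additionally check that the stabilizer of such a pair contains elements of non-square spinor norm (e.g.\ the map fixing the standard basis except for $e_i\mapsto\lambda e_i$, $f_i\mapsto\lambda^{-1}f_i$ with $\lambda$ a non-square) in order to descend from the family-preserving orthogonal group to the socle. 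This is true and standard, but it is precisely the content of the rank computation the paper outsources to \cite{Sheikhthesis}; without it, your construction only bounds the diameters of the intersection graphs, which could a priori be proper edge-supergraphs of the actual orbital graphs. Second, the inequality $d_{\mathcal{O}_j}(U,W)\le\lceil d/j\rceil$ is false when $0<d<j$: such $U$ and $W$ are \emph{not} adjacent in $\mathcal{O}_j$ (their intersection has the wrong dimension), so their distance is at least $2$, not $1$. Your closing remark about routing through a subspace meeting $W$ in codimension exactly $2j$ does repair this (a common neighbour meeting each of $U$ and $W$ in dimension $m-2j$ exists), and since $j\ge 2$ forces $\lfloor m/2\rfloor\ge 2$, the final bound $\mathrm{diam}(X,G)\le\lfloor k/2\rfloor$ survives; but the stated formula should be corrected to $\max\{2,\lceil d/j\rceil\}$ for $d\ge 1$.
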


\begin{proof}
First we show that $\mathrm{diam}(X,G)\geq [k/2].$ It is sufficient to prove that $G$ has an orbital graph whose diameter is at least $[k/2].$  Recall
	 that $n$ is even and $k = n/2$. Let $Q$ be the quadratic form on $V_{n}(q)$ and let $\langle , \rangle$ be the associated bilinear form. Let $\{ e_{1}, \ldots , e_{k}, f_{1}, \ldots , f_{k} \}$ be the standard basis of $V_{n}(q)$ with $Q(e_{i}) = Q(f_{i}) = 0$ and $( e_{i}, f_{j} ) = \delta_{ij}$ for all $i$ and $j$ with $1 \leq i, j \leq k$. We claim that there are vector spaces $W$, $W'$, $W''$ in $\mathcal{S}$ such that $\dim(W \cap W')$ is equal to $0$ if $k$ is even, is $1$ if $k$ is odd and $\dim(W \cap W'') = k-2$. Let $W = \langle e_{1}, \ldots , e_{k} \rangle$. Take $W'$ to be $\langle f_{1}, \ldots , f_{k} \rangle$ if $k$ is even and $W' = \langle e_{1}, f_{2}, \ldots , f_{k} \rangle$ if $k$ is odd. Put $W'' = \langle e_{1}, e_{2}, \ldots , f_{k-1}, f_{k} \rangle$.
	
	The set $\mathcal{S}$ is the union of two $\mathrm{\Omega}^{+}_{n}(q)$-orbits by Description 4 on page 30 of \cite{KL}. Let these be $\mathcal{U}^{1}_{k}$ and $\mathcal{U}^{2}_{k}$. For each $i \in \{ 1,2 \}$ vector spaces $A$ and $B$ are in $\mathcal{U}^{i}_{k}$ if and only if $k - \dim (A \cap B)$ is even by \cite[Description 4]{KL}. It follows that $W$, $W'$, $W''$ belong to the same $\mathrm{\Omega}^{+}_{n}(q)$-orbit.
	
	Let $\mathcal{O}'$ be the orbital graph containing the edge $\{ W, W'' \}$ for the permutation group $\mathrm{PSL}_{n}(q)$ acting on $\mathcal{S}$. By Lemma \ref{seged}, the distance between $W$ and $W'$ in the graph $\mathcal{O}'$ and thus also the distance in the orbital graph for $G$ containing the edge $\{  W, W'' \}$ is at least $[k/2]$.

 Now we prove that $\mathrm{diam}(X,G)\leq [k/2].$ By \cite[Lemma 6.4.2.(3)]{Sheikhthesis}, the rank of this action is $[k/2]+1$ so the upper bound on the orbital diameter follows immediately. 
\end{proof}

\begin{lemma}
	If (2) of Lemma \ref{orbit} holds, then the diameter of $\mathcal{O}$ is at least $k$.
\end{lemma}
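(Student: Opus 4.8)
The plan is to deduce the bound from Lemma \ref{seged} by locating, inside the single $G$-orbit $X$, one pair of members meeting in codimension $1$ and one pair meeting trivially. Recall that in case (2) we have $n$ even, $q$ odd and $U$ non-degenerate of odd dimension $k\le n/2$; since $k$ is odd we may assume $k\ge 3$, the case $k=1$ being trivial. First I would pin down $X$. By Lemma \ref{orbit}(2) the set $\mathcal S$ of all non-degenerate $k$-subspaces is a union of two $G$-orbits, and since $\dim U$ and $q$ are odd, Witt's Lemma (Theorem \ref{Witt}) together with the fact that a non-degenerate space of dimension $\ge 2$ represents both square classes shows that the isometry type of such a subspace is a single orbit and is determined by the square class of its discriminant (the determinant of a Gram matrix). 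Hence the two orbits are exactly the two discriminant classes, and $X$ consists of all non-degenerate $k$-subspaces of one fixed discriminant $\delta$; membership in $X$ is detected by $[\det\mathrm{Gram}(\cdot)]=\delta$.

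For the codimension-one pair, given $U\in X$ I would pick a non-singular $v\in U$ and set $W=\langle v\rangle^{\perp}\cap U$, a non-degenerate hyperplane of $U$, so $U=W\perp\langle v\rangle$. As $3\le k\le n/2$, the space $W^{\perp}$ is non-degenerate of dimension $\ge 4$, so it contains a vector $b$ with $\langle b\rangle\neq\langle v\rangle$ and $Q(b)$ in the same square class as $Q(v)$. Then $U':=W\perp\langle b\rangle$ is non-degenerate with the same discriminant as $U$ (the $W$-contribution and the $[2Q(\cdot)]$-contribution both agree), so $U'\in X$, and $\dim(U\cap U')=k-1$. Since distinct $k$-spaces meet in dimension at most $k-1$, this shows the maximal intersection in $X$ equals $k-1$, so the orbital graph $\mathcal O$ of the statement joins subspaces meeting in codimension $1$; that is, $l=1$ in the notation of Lemma \ref{seged}.

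For a trivially intersecting pair two regimes arise. If $n>2k$, then $U^{\perp}$ is non-degenerate of dimension $\ge k+1$, and greedily choosing $k$ mutually orthogonal non-singular vectors in $U^{\perp}$ whose norm-product lies in the class $\delta$ yields a subspace $U''\subseteq U^{\perp}$ of discriminant $\delta$ (the final vector is chosen in a $\ge 2$-dimensional orthogonal complement, which represents either class); as $U\cap U^{\perp}=0$ we get $U''\in X$ with $U\cap U''=0$. The delicate regime is $n=2k$, where $U^{\perp}$ has dimension exactly $k$ and there is no room inside it. Here I would write a candidate as the graph $U''=\{\,u+Tu:u\in U\,\}$ of a linear map $T\colon U\to U^{\perp}$; such a $U''$ meets $U$ trivially precisely when $T$ is invertible, and a short computation gives $\mathrm{Gram}(U'')=G+T^{t}G'T$, where $G$, $G'$ are Gram matrices of $U$, $U^{\perp}$. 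One then needs an invertible $T$ with $\det(G+T^{t}G'T)\in\delta=[\det G]$, which after normalisation amounts to finding a symmetric $S$ congruent to the identity with $\det(I-S)$ a nonzero square.

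I expect this last point --- the existence of a trivially intersecting mate in $X$ when $n=2k$ --- to be the main obstacle, because the discriminant bookkeeping is controlled by whether $-1$ and $2$ are squares, i.e.\ by $q\bmod 8$, and the naive orthogonal (diagonal $T$) choices degenerate for small $q$ such as $q=3$. For large $q$ a parameter count of the invertible $T$ against the loci $\det=0$ and $[\det]\neq\delta$ settles it, while for the finitely many small $q$ one exhibits an explicit $S$ (for instance a $2\times2$ hyperbolic block together with a suitable diagonal entry) realising a nonzero square value of $\det(I-S)$. Once a trivially intersecting $U''\in X$ is in hand, the conclusion is immediate: applying the ``in particular'' clause of Lemma \ref{seged} with $l=1$ and $r=k$ to the pair $U,U''$ gives $d(U,U'')\ge k$, whence the diameter of $\mathcal O$ is at least $k$.
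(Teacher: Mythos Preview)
Your argument rests on a misidentification of the orbit $X$. You write that ``$\mathcal{S}$ [is the set] of all non-degenerate $k$-subspaces'' and that ``the two orbits are exactly the two discriminant classes, and $X$ consists of all non-degenerate $k$-subspaces of one fixed discriminant $\delta$''. But in the paper $\mathcal{S}$ is defined as the set of subspaces \emph{isometric} to $U$, i.e.\ already a single discriminant class; Witt's Lemma makes $\mathcal{S}$ a single orbit under the full isometry group, and the content of Lemma~\ref{orbit}(2) is that this single isometry class may split into two $G_{0}$-orbits under $\mathrm{P}\Omega^{\pm}_{n}(q)$ (the splitting is governed by spinor-norm type data, not by discriminant). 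Consequently, your constructions of $U'$ and $U''$, which only match the discriminant of $U$, guarantee $U',U''\in\mathcal{S}$ but not $U',U''\in X$. The criterion ``$[\det\mathrm{Gram}(\cdot)]=\delta$'' does not detect membership in $X$, so both the codimension-one step and the disjoint step are incomplete.

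The paper sidesteps this entirely with a pigeonhole trick: rather than building a single companion in $X$, it produces \emph{three} subspaces $A_{1},A_{2},A_{3}\in\mathcal{S}$ with $\dim(A_{i}\cap A_{j})=k-1$ for all $i<j$, and three subspaces $B_{1},B_{2},B_{3}\in\mathcal{S}$ with $\dim(B_{i}\cap B_{j})=0$ for all $i<j$. Since $\mathcal{S}$ is at most two $G_{0}$-orbits, in each triple some pair lies in the same orbit; translating by transitivity puts one member at $U$ and gives the required $U'\in X$ (respectively $U''\in X$). Lemma~\ref{seged} then finishes exactly as you intended. Note this also dissolves the ``delicate'' $n=2k$ regime you flagged: one only needs three pairwise disjoint members of $\mathcal{S}$ in $V$, with no need to control which $\Omega$-orbit any individual one lands in. Your explicit constructions are not wasted---each already furnishes two of the three needed subspaces---but without the third subspace and the pigeonhole step the argument does not close.
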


\begin{proof}
	The set $\mathcal{S}$ of all subspaces isomorphic to $U$ consists of at most two $G_{0}$-orbits, one of these being $X$. By inspecting the non-singular symmetric bilinear form on $V_{n}(q)$ with $n \geq 3$ (see \cite[Section 3.4.6]{Wilson}), we see that there exist three subspaces $A_{1}$, $A_{2}$, $A_{3}$ in $\mathcal{S}$ such that $\dim(A_{i} \cap A_{j}) = k-1$ for all $i$ and $j$ with $1 \leq i < j \leq 3$. It follows that there is $A' \in X$ with $\dim (U \cap A') = k-1$. Similarly, one can show that there are subspaces $B_{1}$, $B_{2}$, $B_{3}$ in $\mathcal{S}$ such that $\dim (B_{i} \cap B_{j}) = 0$ for every $i$ and $j$ with $1 \leq i < j \leq 3$. It follows that there is $B' \in X$ such that $\dim (U \cap B') = 0$. Thus the diameter of $\mathcal{O}$ is at least $k$ by Lemma \ref{seged}.
\end{proof}

From now on we assume that $X = \mathcal{S}$.

\begin{lemma}
	If $X = \mathcal{S}$, then the diameter of $\mathcal{O}$ is at least $k$.
\end{lemma}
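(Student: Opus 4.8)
The plan is to reduce to an application of Lemma \ref{seged} exactly as in the two immediately preceding lemmas: it suffices to exhibit, inside $X = \mathcal{S}$, a configuration of subspaces showing that $U$ has a neighbour at intersection-codimension $1$ and another subspace at intersection $0$ with $U$. Concretely, I would produce two subspaces $U'$ and $U''$ in $X$ with $\dim(U \cap U') = k-1$ and $\dim(U \cap U'') = 0$; then Lemma \ref{seged} with $l = 1$ and $r = k$ gives immediately that the diameter of $\mathcal{O}$ is at least $k$. The point is that since we now assume $X = \mathcal{S}$, every subspace isometric to $U$ lies in the single $G$-orbit $X$, so I am free to build such subspaces directly from a convenient basis without worrying about orbit-splitting (that subtlety having been dealt with in Lemma \ref{orbit} and the two preceding lemmas for the exceptional cases).

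The construction splits according to whether $U$ is totally singular or non-degenerate. In both cases I would fix the appropriate basis from Section 2 (the symplectic, unitary, or standard orthogonal basis) for $V_n(q)$, say $\{e_1,\dots,e_m,f_1,\dots,f_m\}$ (with an extra $x$, or an anisotropic pair $x,y$, in the odd-dimensional or minus-type cases). For $U$ totally singular of dimension $k \le n/2$, I would take $U = \langle e_1,\dots,e_k\rangle$. A neighbour at codimension $1$ is obtained by swapping one basis vector for a parallel totally singular one: for instance $U' = \langle e_1,\dots,e_{k-1}, e_k'\rangle$ where $e_k'$ spans a different totally singular line inside $\langle e_k,f_k\rangle^{\perp}$ meeting $U$ in dimension $k-1$; and a subspace meeting $U$ trivially is the ``opposite'' totally singular space $U'' = \langle f_1,\dots,f_k\rangle$, which is isometric to $U$ by Witt's Lemma (Theorem \ref{Witt}) and hence lies in $\mathcal{S} = X$. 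For $U$ non-degenerate, I would take $U$ to be spanned by $k$ hyperbolic or orthogonal basis vectors and similarly produce $U'$ by replacing one spanning vector with another non-degenerate vector keeping $k-1$ in common, and $U''$ non-degenerate with $U \cap U'' = 0$ (e.g.\ built from disjoint basis vectors of the same isometry type). In each case Witt's Lemma guarantees the candidate subspaces are isometric to $U$, hence in $\mathcal{S}$, and since $X = \mathcal{S}$ they lie in $X$.

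The main obstacle, and the only place requiring genuine care, is verifying that $U'$ and $U''$ are genuinely \emph{isometric} to $U$ (so that they sit in $X$) while simultaneously realizing the prescribed intersection dimensions $k-1$ and $0$ respectively; this is a dimension count combined with a form-preservation check that must be run separately for the totally singular and the non-degenerate families, and within the non-degenerate family across the symplectic, unitary, and the various orthogonal types. One should be mildly alert to small-characteristic and small-dimension degeneracies (for example, ensuring the ambient dimension $n$ is large enough to house two totally singular $k$-spaces meeting trivially, which holds precisely because $k \le n/2$), but these are exactly the ranges permitted by our standing hypotheses. I expect the verification to be routine once the explicit bases are written down, so the proof reduces, as claimed, to invoking Lemma \ref{seged}.
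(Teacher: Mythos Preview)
Your approach is exactly the paper's: reduce to Lemma \ref{seged} by exhibiting $U',U''\in X=\mathcal{S}$ with $\dim(U\cap U')=k-1$ and $\dim(U\cap U'')=0$, constructing these explicitly from a symplectic, unitary, or standard orthogonal basis according to the type of $G_0$ and whether $U$ is totally singular or non-degenerate. The paper carries this out case by case, and your totally singular and symplectic/unitary non-degenerate sketches match its constructions essentially verbatim.

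One caution: your heuristic ``$U''$ built from disjoint basis vectors of the same isometry type'' is too optimistic in the orthogonal non-degenerate case and is not what the paper does there. For instance, if $U$ has type $\mathrm{O}_{2l}^{-}$ inside $V$ of type $\mathrm{O}_n^{+}$, the standard basis of $V$ contains no anisotropic vectors at all, so no subset of basis vectors spans a minus-type space; and when $k=n/2$ with $U$ of type $\mathrm{O}_k^{+}$ inside $V$ of type $\mathrm{O}_n^{-}$, the ``obvious'' disjoint complement $U^{\perp}$ has the wrong sign. The paper handles these by hand with carefully chosen linear combinations (its list of explicit $U,U',U''$ in the orthogonal non-degenerate case runs to about a dozen sub-cases), so while your outline is correct, the verification in that family is a genuine case analysis rather than routine.
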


\begin{proof}
	It is sufficient to show by Lemma \ref{seged} that there are $U'$, $U'' \in \mathcal{S}$ such that $\dim(U \cap U') = k-1$ and $\dim(U \cap U'') = 0$. This is clear in case $G_{0} = \mathrm{PSL}_{n}(q)$.

 Now consider the case when $G_0$ is not orthogonal.
	Assume that $V_{n}(q)$ admits a non-singular alternating bilinear form. Let $$\{ e_{1}, \ldots , e_{m}, f_{1}, \ldots , f_{m} \}$$ be a standard symplectic basis with $n = 2m$. If $U = \langle e_{1}, \ldots , e_{\ell}, f_{1}, \ldots , f_{\ell} \rangle$ is a non-degenerate space with $k = 2 \ell$, then put
	$U' = \langle e_{1}, \ldots , e_{\ell}, f_{1}, \ldots , f_{\ell}+f_{\ell+1} \rangle$ and $U'' = \langle e_{\ell+1}, \ldots , e_{2 \ell}, f_{\ell + 1}, \ldots , f_{2 \ell} \rangle$. If $U = \langle e_{1}, \ldots , e_{k} \rangle$ is totally singular, then put $U' = \langle e_{1}, \ldots , e_{k-1}, f_{k} \rangle$ and $U'' = \langle f_{1}, \ldots , f_{k} \rangle$.
	
	Let $V_{n}(q)$ admit a non-singular conjugate-symmetric sesquilinear form and let $\{ e_{1}, \ldots , e_{n} \}$ be an orthonormal basis for this form. If $U = \langle e_{1}, \ldots , e_{k} \rangle$ is non-degenerate, then put $U' = \langle e_{1}, \ldots e_{k-1}, e_{k+1} \rangle$ and $U'' = \langle e_{k+1}, \ldots , e_{2k} \rangle$. If $U$ is totally singular, then we may write $V_{n}(q)$ as a perpendicular direct sum of non-singular $2$-spaces (together with a $1$-space in case $n$ is odd) such that each $2$-space admits a symplectic basis (see \cite[p. 67]{Wilson}). Now proceed as in the second part of the previous paragraph.

 Now consider the case when $G_0$ is orthogonal. 

Consider the case when $U$ is non-degenerate. Recall the standard basis of $V_n(q).$ This is $\{e_1,\dots,e_m,f_1,\dots,f_m\}$ or $\{e_1,\dots,e_{m-1},f_1,\dots,f_{m-1},x,y\}$ for $n=2m$ of type $\mathrm{O}_n^+$ or $\mathrm{O}_n^-$ respectively, and for $n=2m+1$ it is  $\{e_1,\dots,e_m,f_1,\dots,f_m\,x\}.$ For all $i,j$ we have $Q(e_i)=Q(f_i)=0,$ $( e_i,e_j)=( f_i,f_j) =0,$ $( e_i,f_j)=\delta_{i,j}$ and $( x,y) =1,$ $( e_i,x) =( f_i,x) =( e_i,y)=( f_i,y) =0,$ $Q(x)=1$ and $Q(y)=\alpha$  as in \cite[Proposition 2.5.3]{KL}. Now we will find $U$, $U'$, $U''$ such that $\dim(U\cap U')=k-1$ and $\dim (U\cap U'')=0,$ and then the result will follow by Lemma \ref{seged}. 
We may assume that $k\geq 3$ and so $n\geq 6.$ We have several cases to consider. 
Assume that $n$ is even.

Assume $k=2l.$

Assume $U$ is of type $\mathrm{O}_{2l}^+.$ For $k< n/2$ or $G_0 = \mathrm{P\Omega}_n^+(q),$ choose $$U=\langle e_1,\dots,e_l,f_1,\dots,f_l\rangle,$$  $$U'=\langle e_1,\dots,e_l,f_1,\dots,f_{l-1},f_l+f_{l+1}\rangle$$ and $$U''=\langle e_{l+1},\dots,e_{2l},f_{l+1},\dots,f_{2l}\rangle.$$    
For $k= n/2$ or $G_0= \mathrm{P\Omega}_n^-(q),$ choose $$U=\langle e_1,\dots,e_l,f_1,\dots,f_l\rangle,$$  $$U'=\langle e_1,\dots,e_l,f_1,\dots,f_{l-1},e_l- f_l+x\rangle$$ and $$U''=\langle e_{l+1},\dots,e_{2l-1},f_{l+1},\dots,f_{2l-1},e_1-f_1+x,e_1-\alpha f_1+y\rangle. $$

Assume $U$ is of type $\mathrm{O}_{2l}^-.$ For $G_0=\mathrm{P\Omega}_n^-(q),$ choose $$U=\langle e_1,\dots,e_{l-1},f_1,\dots,f_{l-1},f_l+\alpha e_l,f_{l+1}+x+e_l\rangle,$$  $$U'=\langle e_1,\dots,e_{l-1},f_1,\dots,f_{l-1},f_l+\alpha e_l,f_{l+1}+e_{l+1}+e_l\rangle$$ and $$U''=\langle e_{l+1},\dots,e_{2l-1},f_{l+1},\dots,f_{2l-1},y,e_1+x\rangle.$$  
For  $G_0=\mathrm{P\Omega}_n^+(q),$ choose $$U=\langle e_1,\dots,e_{l-1},f_1,\dots,f_{l-1},f_l+\alpha e_l,f_{l+1}+e_{l+1}+e_l\rangle,$$  $$U'=\langle e_1,\dots,e_{l-1},f_1,\dots,f_{l-1},f_l+\alpha e_l,f_{l+2}+e_{l+2}+e_l\rangle$$ and $$U''=\langle e_{l+2},\dots,e_{2l},f_{l+2},\dots,f_{2l},e_1+\alpha f_1+f_l,e_2+ f_2+f_{l+1}+f_1\rangle.$$

Assume $k=2l+1.$ For $G_0 = \mathrm{P\Omega}_n^-(q),$ choose $$U=\langle e_1,\dots,e_{l},f_1,\dots,f_{l},x\rangle,$$  $$U=\langle e_1,\dots,e_{l},f_1,\dots,f_{l},f_{l+1}+e_{l+1}\rangle,$$  and $$U''=\langle e_{l+1},\dots,e_{2l},f_{l+1},\dots,f_{2l},y\rangle.$$ 
For  $G_0 = \mathrm{P\Omega}_n^+(q),$ choose $$U=\langle e_1,\dots,e_{l},f_1,\dots,f_{l},f_{2l+1}+e_{2l+1}+e_{l+1}\rangle,$$  $$U'=\langle e_1,\dots,e_{l},f_1,\dots,f_{l},f_{l+2}+e_{l+2}\rangle$$ and $$U''=\langle e_{l+2},\dots,e_{2l+1},f_{l+2},\dots,f_{2l+1},e_1+ f_1+f_{l+1}\rangle.$$

Now assume $n$ is odd. Now $k \leq (n-1)/2.$ We can choose the same $U,$ $U'$ and $U''$ as we did for $G_0 = \mathrm{P\Omega}_{n-1}^+(q).$

Now consider the case when $U$ is totally singular. 

Recall that there is a basis as in \cite[Proposition 2.5.3]{KL}. In the various cases the maximum possible value of $k = \dim(U)$ is determined according to \cite[Proposition 2.5.4]{KL}. Using this information one can see that there are subspaces $U'$ and $U''$ in $X$ with $\dim(U \cap U') = k-1$ and $\dim(U \cap U'') = 0$. It follows by Lemma \ref{seged} that in this case the diameter of $\mathcal{O}$ is at least $k$. 
\end{proof}

\subsection{Case (c)} Let $(G,X)$ satisfy (c) of Definition \ref{d1}. Recall that in this case $G_{0} = \mathrm{PSL}_{n}(q)$, $G$ contains a graph automorphism of $G_{0}$, and $X$ is an orbit of pairs of subspaces $\{ U, W \}$ of $V = V_{n}(q)$, where either $U \subseteq W$ or $V = U \oplus W$, and $\dim U = t$, $\dim W = n-t$. We may suppose that $k = t \leq n/2$. Let $\{ e_{1}, \ldots , e_{n} \}$ be a basis for $V = V_{n}(q)$ and let $\sigma$ be the inverse transpose automorphism with respect to this basis. We may view $\sigma$ also as an automorphism of $G_{0}$ and thus an element of $G$. Since we aim to establish a lower bound for $\mathrm{diam}(X,G)$, there is no harm in assuming that $G$ is as large as possible, $\mathrm{P\Gamma L}(V) \leq G$ and $\sigma \in G$. Put $U = \langle e_{1}, \ldots , e_{t} \rangle$ and define the subspaces $W_{1}$ and $W_{2}$ to be $\langle e_{t+1}, \ldots , e_{n} \rangle$ and $\langle e_{1}, \ldots , e_{n-t} \rangle$ respectively. Let $p_{1}$ be the permutation matrix swapping $e_{t}$ and $e_{t+1}$ and fixing all other basis vectors, and let $p_{2}$ be the permutation matrix interchanging $e_{t}$ and $e_{n-t+1}$ and fixing all other basis vectors. Let $i$ be $1$ or $2$. Observe that if $\{ U, W_{i} \} \in X$ then $\{ Up_{i}, W_{i}p_{i}  \} \in X$. Let us denote $\{ Up_{i}, W_{i}p_{i}  \}$ by $\{ U, W_{i} \}p_{i}$. Let $\mathcal{O}_{i}$ be the orbital graph of $G$ containing the edge $E_{i} = \{ \{ U, W_{i} \}, \{ U, W_{i} \}p_{i} \}$. Let the image of $\{ U, W_{i} \}$ under $\sigma$ be denoted by $\{ U, W_{i} \} \sigma$. This is $\{ U \sigma, W_{i}\sigma \}$ where $U \sigma = W_{1}$, $W_{1}\sigma = U$ and $W_{2}\sigma = \langle e_{n-t+1}, \ldots , e_{n} \rangle$. Observe that $\sigma$ centralizes $p_{i}$. It follows that the edge $E_{1}$ is fixed by $\sigma$. Let $h$ be the permutation matrix swapping $e_{j}$ with $e_{n+1-j}$ for every $j$ in $\{ 1, \ldots , t \}$ and fixing the vectors $e_{t+1}, \ldots , e_{n-t}$. Observe that $\sigma h \in G$ and $\{ U, W_{2} \} \sigma h = \{ U \sigma, W_{2} \sigma \} h = \{ W_{1}, \langle e_{n-t+1}, \ldots , e_{n} \rangle \} h = \{ W_{2}, U \}$. Observe also that $h$ centralizes $p_{2}$. It follows that $\sigma h$ fixes the edge $E_{2}$. To summarize, $\mathcal{O}_{i}$ is an orbital graph also of $\mathrm{P\Gamma L}(V)$. From now on assume that $G = \mathrm{P\Gamma L}(V)$. 

Let $t < n/2$. For each pair $\{ U', W \}$ in $X$ one of $\dim (U')$ and $\dim(W)$ is smaller, and so, by disregarding the vector space of larger dimension from every pair in $X$, we may repeat the argument in Lemma \ref{seged} and obtain that the diameter of $\mathcal{O}_{i}$ is at least $k$. Thus $\mathrm{diam}(X,G) \geq k$.

Let $t = n/2$. The case when the vector spaces are equal in any pair in $X$ was treated in (b). Assume that this is not the case and that
$V = U \oplus W$ whenever $\{ U, W \} \in X$. Since $G_{0}$ is contained in $G$, the orbit $X$ is precisely the set of all pairs $\{ U , W \}$ such that $V = U \oplus W$ and $\dim(U) = \dim(W) = t$. Consider the orbital graph $\mathcal{O}_{1}$. If $\{ \{ U_{1}, W_{1} \}, \{ U_{2}, W_{2} \} \}$ is an edge in $\mathcal{O}_{1}$, then $\dim (U_{1} \cap U_{2}) = t-1$, $\dim (W_{1} \cap W_{2}) = t-1$, and $\dim (U_{1} \cap W_{2}) = \dim (U_{2} \cap W_{1}) = 1$ (with possibly $U_1$ and $W_1$ interchanged).

For arbitrary pairs $A = \{ U_{1}, W_{1} \}$ and $B = \{ U_{2}, W_{2} \}$ in $X$, let $d(A,B)$ denote their distance in $\mathcal{O}_{1}$ and define $\dim(A,B)$ to be $$\max \{ \dim (U_{1} \cap W_{2}), \dim (U_{1} \cap U_{2}), \dim (U_{2} \cap W_{1}), \dim (W_{1}\cap W_{2})\}.$$

\begin{lemma}
	\label{pairofspaces}
	For pairs $A$ and $B$ in $X$ we have $d(A,B) \geq t - \dim(A,B)$.
\end{lemma}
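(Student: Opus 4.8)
The plan is to prove, by induction on $r = d(A,B)$, the equivalent reformulation $\dim(A,B) \geq t - r$. The base cases are immediate: when $r = 0$ we have $A = B$, so $\dim(U_1 \cap U_2) = t$; and when $r = 1$ the pairs are adjacent, so by the description of the edges of $\mathcal{O}_1$ we have $\dim(U_1 \cap U_2) = t-1$ after possibly interchanging $U_1$ and $W_1$. In both cases one of the four intersections defining $\dim(A,B)$ meets the required bound.

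For the inductive step I would fix pairs $A = \{U_1, W_1\}$ and $B = \{U_2, W_2\}$ with $d(A,B) = r \geq 2$, and (using that $\mathcal{O}_1$ is connected) choose a pair $C = \{U_3, W_3\}$ adjacent to $B$ with $d(A,C) = r-1$. By the induction hypothesis $\dim(A,C) \geq t - (r-1)$, so one of the four intersections realizing $\dim(A,C)$ is large; after relabelling I may assume $\dim(Y \cap Y') \geq t - r + 1$ for some $Y \in \{U_1, W_1\}$ and $Y' \in \{U_3, W_3\}$. The content of the edge description is that each member of $C$ meets exactly one member of $B$ in dimension $t-1$ (and the other in dimension $1$); in particular $Y'$ meets some $Z \in \{U_2, W_2\}$ with $\dim(Y' \cap Z) = t-1$.

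It then remains to combine these two intersections inside the $t$-dimensional space $Y'$. Since $Y \cap Y'$ and $Y' \cap Z$ are both subspaces of $Y'$, the standard dimension inequality gives
\[
\dim(Y \cap Z) \;\geq\; \dim(Y \cap Y' \cap Z) \;\geq\; \dim(Y \cap Y') + \dim(Y' \cap Z) - \dim(Y') \;\geq\; (t-r+1)+(t-1)-t \;=\; t-r.
\]
As $Y \in \{U_1, W_1\}$ and $Z \in \{U_2, W_2\}$, the quantity $\dim(Y \cap Z)$ is one of the four intersections defining $\dim(A,B)$, so $\dim(A,B) \geq t-r$ and the induction closes. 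This is precisely the pair-of-subspaces analogue of the submodularity argument already used in Lemmas \ref{seged} and \ref{seged2}.

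I expect the only genuine subtlety, rather than a real obstacle, to be the bookkeeping forced by the pairs being unordered: one must keep track of which member of each pair is being used along the path and verify, from the edge structure of $\mathcal{O}_1$, that a fixed $t$-space meets precisely one member of an adjacent pair in codimension $1$. Once the indices $Y, Y', Z$ are introduced symmetrically, the various matchings (the "$U$-to-$U$" and "$U$-to-$W$" cases) all collapse into the single dimension count displayed above.
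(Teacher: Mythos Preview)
Your proof is correct and follows essentially the same approach as the paper: induction on the distance together with the submodularity of dimension inside the intermediate $t$-space. The only cosmetic differences are that you place the intermediate vertex $C$ adjacent to $B$ (the paper places it adjacent to $A$) and you argue the inequality directly rather than by contradiction; the key observation you make explicit---that each member of an adjacent pair meets some member of the other in dimension $t-1$---is exactly what the paper uses implicitly when it relabels $U_1$ and $U_3$.
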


\begin{proof}
	Let $A = \{ U_{1}, W_{1} \}$ and $B = \{ U_{2}, W_{2} \}$. We proceed by induction on $d(A,B)$. If $d(A,B) = 1$, then $\dim(A,B) = t-1$. Assume that $d(A,B) = d \geq 2$. If $\dim(A,B) \geq t-d$, then the claim holds. Assume that $\dim(A,B) \leq t-d-1$. There exists $C = \{ U_{3}, W_{3} \} \in X$ such that $d(A,C) = 1$ and $d(C,B) \leq d-1$. There exist an element of $A$ and an element of $C$ whose intersection has dimension $t-1$, by the definition of $\mathcal{O}_{1}$. Let these be $U_{1}$ and $U_{3}$. There exists, by induction, an element, say $U_{2}$, of $B$ whose intersection with $U_{3}$ has dimension at least $t-d+1$. Since $\dim(A,B) \leq t-d-1$, we have $\dim(U_{1} \cap U_{2}) \leq t-d-1$. Since $(U_{1} \cap U_{3}) + (U_{2} \cap U_{3})$ is contained in $U_{3}$, it follows that $\dim((U_{1} \cap U_{3}) + (U_{2} \cap U_{3})) \leq \dim(U_{3}) = t$. But this leads to a contradiction since the left-hand side of this inequality is at least $t+1$ since $$t+1 = (t-1) + (t-d+1) - (t-d-1) \leq \dim(U_{1} \cap U_{3}) + \dim(U_{2} \cap U_{3}) - \dim(U_{1} \cap U_{2} \cap U_{3}).$$
\end{proof}

The invariant $\dim(A,B)$ may take the value $0$ as shown by the following example. Let $A = \{ U_{1}, W_{1} \}$ be a pair in $X$. Let $\alpha$ be an isomorphism between $U_{1}$ and $W_{1}$ and let $\beta$ be such an isomorphism of $W_{1}$ which fixes no non-zero vector in $W_{1}$. Define $U_{2} = \{ u_{1} + u_{1}\alpha \mid u_{1} \in U_{1} \}$ and $W_{2} = \{ u_{1} + u_{1}\alpha \beta \mid u_{1} \in U_{1} \}$. By transitivity, the pair $B = \{ U_{2}, W_{2} \}$ is also in $X$, and, by construction, $\dim(A,B) = 0$.

We now have $\mathrm{diam}(X,G) \geq t = k$ by Lemma \ref{pairofspaces}.

\section{Proof of Theorem \ref{thm2}}

In this section we prove Theorem \ref{thm2}.

Let $G$ be a primitive permutation group acting on a finite set $X$ such that $G$ has a standard $t$-action for some positive integer $t$. 


\subsection{Case (b)} Let $(X,G)$ be as in (b) of Definition \ref{d1}. By the beginning of Section 3.2, we may assume that $\dim(U) = k$ where $U$ is a member of $X$. Let $X \not= \mathcal{S}_t$. In particular, $G_{0} \not= \mathrm{PSL}_{n}(q)$.



We may assume that $U$ is a non-degenerate vector space. If $\dim(U) \geq 3$, then $\mathrm{diam}(X,G) \geq 3$ by Theorem \ref{thm1}. Thus, in order to prove Theorem \ref{thm2}, we assume that $k = 1$ or $k = 2$.  

We first consider the case when $\dim(U)=1$. Since $U$ is non-degenerate and the case $G_{0} = \mathrm{PSL}_{n}(q)$ was treated earlier, we can assume that $G_{0}$ is unitary or orthogonal. 

\begin{lemma}
If $G_{0}$ is a unitary group with $n \geq 5$ and $X$ is the set of all non-degenerate $1$-spaces, then $\mathrm{diam}(X,G) = 2$.
\end{lemma}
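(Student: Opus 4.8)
The plan is to classify the orbitals of this action by a single field invariant and then prove the diameter bound by an explicit common-neighbour construction. Fix a unitary form $f$ on $V = V_n(q^2)$ and, for a non-degenerate $1$-space $U = \langle u \rangle$, normalise its generator so that $f(u,u)=1$ (possible since the norm map $N\colon F_{q^2}\to F_q$ is onto). First I would show that for two non-degenerate $1$-spaces $\langle u\rangle,\langle w\rangle$ with $f(u,u)=f(w,w)=1$ the quantity $\alpha := f(u,w)f(w,u)=N(f(u,w))\in F_q$ is a complete invariant of the pair: rescaling $u,w$ by norm-$1$ scalars multiplies $f(u,w)$ by an arbitrary norm-$1$ element, so $\alpha$ is well defined, and by Witt's Lemma (Theorem \ref{Witt}) two pairs with the same Gram matrix are isometric. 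Thus the orbitals are indexed by $\alpha\in F_q$, the diagonal sitting inside $\alpha=1$ (together with a genuine orbital on pairs spanning a degenerate $2$-space), the value $\alpha=0$ giving orthogonal pairs, and $\alpha\neq 1$ giving pairs spanning a non-degenerate $2$-space. Since $n\geq 5$ the action has rank at least $3$, so it is not $2$-transitive and every orbital graph is connected (Higman) but not complete; hence $\mathrm{diam}(X,G)\geq 2$, and it remains to prove the upper bound $\mathrm{diam}(X,G)\leq 2$.

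For the upper bound, fix a target orbital $\mathcal{O}_\alpha$ and a base vertex $U_0=\langle u_0\rangle$; by vertex-transitivity it suffices to show that every $W=\langle w\rangle$ with $W\neq U_0$ and $W\not\sim U_0$ has a common neighbour $Y$ with $U_0$. Choose a non-degenerate $3$-space $M'$ of $V$ containing $\langle u_0,w\rangle$ (possible as $\dim V\geq 5$). The conjugate-linear map $\Phi\colon M'\to F_{q^2}^2$, $v\mapsto (f(u_0,v),f(w,v))$ has kernel $M'\cap\langle u_0,w\rangle^{\perp}$ of dimension $1$, so it is onto. Pick $r,t\in F_{q^2}$ with $N(r)=N(t)=\alpha$ and a preimage $y_0\in M'$ with $\Phi(y_0)=(r,t)$; then choose $z\in (M')^{\perp}$ with $f(z,z)=1-f(y_0,y_0)$, which is possible because $(M')^{\perp}$ is a non-degenerate unitary space of dimension $n-3\geq 2$ and so represents every element of $F_q$. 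Setting $y=y_0+z$ gives $f(y,y)=1$, $f(u_0,y)=r$ and $f(w,y)=t$, whence $Y=\langle y\rangle$ lies in $X$ and both $\{U_0,Y\}$ and $\{W,Y\}$ lie in $\mathcal{O}_\alpha$.

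It remains to guarantee $Y\neq U_0,W$. When $\alpha\neq 1$ this is automatic, since $N(f(u_0,y))=\alpha\neq 1$ forces $y\not\propto u_0$, and likewise $y\not\propto w$. The one delicate orbital is $\alpha=1$ (pairs spanning a degenerate $2$-space), where $N(f(u_0,y))=1$ does not separate $Y$ from $U_0$; here I would instead insist on a nonzero $z$, which exists with $f(z,z)=1-f(y_0,y_0)$ because $(M')^{\perp}$ has dimension $n-3\geq 2$ and therefore contains nonzero vectors of every norm, including an isotropic one. Then $y\notin M'\supseteq\langle u_0,w\rangle$, so $Y\neq U_0,W$. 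This is precisely where the hypothesis $n\geq 5$ is used. Combining the two bounds yields $\mathrm{diam}(X,G)=2$.

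The step I expect to be the main obstacle is establishing that $\alpha$ is a complete invariant for the orbits of $G$, and not merely of $\mathrm{GU}_n(q)$: one must check that $\mathrm{SU}_n(q)$, and hence $\mathrm{PSU}_n(q)\trianglelefteq G$, already acts transitively on pairs with a fixed value of $\alpha$, i.e. that the relevant pair stabiliser in $\mathrm{GU}_n(q)$ surjects onto the determinant group. This should follow because that stabiliser contains a unitary group acting on a non-degenerate complement of positive dimension, which already realises all determinants. Secondary care is needed for small $q$ and for confirming that the orthogonal case $\alpha=0$ and the degenerate case $\alpha=1$ are both handled uniformly by the single construction above.
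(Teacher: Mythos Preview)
Your proof is correct and follows the same overall strategy as the paper: parametrise the non-diagonal orbitals by a single field invariant (your $\alpha=N(f(u,w))\in\mathbb{F}_q$ is equivalent to the paper's coset label $\{\lambda,\overline{\lambda}\}D$, since the cosets of the norm-$1$ subgroup $D$ are precisely the fibres of the norm map), deduce rank $\geq 3$ for the lower bound, and exhibit a common neighbour for the upper bound. The execution of the last step differs: the paper writes down explicit vectors in a unitary basis, splitting into the cases $n=5$ and $n\geq 6$, whereas your argument via the surjectivity of the conjugate-linear map $\Phi$ on a non-degenerate $3$-space $M'\supseteq\langle u_0,w\rangle$, followed by norm-adjustment in $(M')^{\perp}$, is coordinate-free and handles all $n\geq 5$ uniformly. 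You are also more careful than the paper on one point: you flag (and correctly resolve) the question of whether the $\mathrm{PSU}_n(q)$-orbits on pairs coincide with the $\mathrm{GU}_n(q)$-orbits given by Witt's lemma, which the paper leaves implicit.
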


\begin{proof}
The natural module $V = V_{n}(q)$ is a unitary space with a non-degenerate unitary form $f$. Let $U$ be a member of $X$. This is a non-degenerate subspace of dimension $1$. 
In this case $X=\{\langle v\rangle \vert f(v,v)=1\}.$ Let $$S_{\langle v\rangle}=\{\alpha v\vert f(\alpha v,\alpha v)=1\}=\{\alpha v\vert \alpha^{q+1}=1 \}$$ where $\alpha\in \mathbb{F}_{q^2}.$ This set has cardinality larger than 1. Let $D=\{ \alpha\in \mathbb{F}_{q^2}\vert \alpha^{q+1}=1\}.$ This is a cyclic group of order $q+1.$ For a scalar $\lambda\in \mathbb{F}_{q^2},$ we define $$\{ \lambda, \overline{\lambda} \} D = \{ \lambda \alpha\vert \alpha \in D\} \cup \{ \overline{\lambda} \alpha\vert \alpha \in D\}$$ and let $$\Delta_{\lambda D}=\{ \{ \langle v\rangle, \langle w \rangle \} \vert f(v,v)=f(w,w)=1, f(v,w)\in \{ \lambda, \overline{\lambda} \} D\}.$$ 

The set $\Delta_{\lambda D}$ is non-empty for every $\lambda \in \mathbb{F}_{q^{2}}$ provided that $n \geq 3$. To see this, let $\mathcal{B}$ be a unitary basis for $V$ and let $\chi \in \mathbb{F}_{q^{2}}$ be such that $\chi + \overline{\chi} = 1$ ($\chi$ exists by the surjectivity of the trace map). Take $v = v_{\lambda} = e_{1} + \chi f_{1}$ where $e_{1}$, $f_{1} \in \mathcal{B}$. If $n \geq 3$ is odd, then $x \in \mathcal{B}$ and we may take $w = w_{\lambda} = x + \overline{\lambda}f_{1}$. On the other hand, if $n \geq 4$ is even, then $e_{2}, f_{2} \in \mathcal{B}$ and we take $w = w_{\lambda} = e_{2} + \chi f_{2} + \overline{\lambda}f_{1}$. 

Observe that $\Delta_{\lambda D}$ is invariant under $G.$ By applying Theorem \ref{Witt} on 2-spaces of the form $\langle v, w \rangle\,\,\text{such that}\,\, f(v,v)=f(w,w)=1, f(v,w)\in \{ \lambda, \overline{\lambda} \} D,$ it follows that $\Delta_{\lambda D}$ is an orbital. For any $u, v\in V$ such that $f(u,u)=f(v,v)=1$ we have that $f(u,v)$ is either $0$ or is in the union of one or two cosets of $D,$ hence all non-diagonal orbitals are of the form $\Delta_{\lambda D}$ for some $\lambda$. In particular, the rank of the permutation group $G$ on $X$ is at least $3$ implying that $\mathrm{diam}(X,G) \not= 1$. 

Let $\lambda$, $\mu \in \mathbb{F}_{q^{2}}$ such that $\lambda \not= \mu$. Let $\mathcal{B}$ be a unitary basis for $V$. In this case $e_{1}$, $f_{1}$, $e_{2}$, $f_{2}$ are in $\mathcal{B}$. Let $v = e_{1} + \chi f_{1}$ and $w = \mu f_{1} + e_{2} + \chi f_{2}$ for some $\chi \in \mathbb{F}_{q^{2}}$ such that $\chi + \overline{\chi} = 1$. If $n=5$, then $x \in \mathcal{B}$ and we may take $u$ to be $x + \lambda f_{1} + \lambda f_{2}$. If $n \geq 6$, then $e_{3}$, $f_{3} \in \mathcal{B}$ and we take $u$ to be $f_{3} + \chi e_{3} + \lambda f_{1} + \lambda f_{2}$. We have $f(v,v) = f(w,w) = f(u,u) = 1$ and $f(v,w) = \overline{\mu}$, $f(u,v) = \lambda$, $f(u,w) = \lambda$. 
\end{proof}

\begin{lemma}
If $G_{0}$ is an orthogonal group with $n\geq 5,$ $q$ odd and $X$ is an orbit of non-degenerate $1$-spaces, then $\mathrm{diam}(X,G) = 2$.
\end{lemma}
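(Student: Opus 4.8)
The plan is to mirror the method of the preceding unitary lemma. Write $V = V_n(q)$ for the natural orthogonal space, with non-degenerate quadratic form $Q$ and associated symmetric bilinear form $(\,,\,)$, so that $(v,v) = 2Q(v)$ and a $1$-space $\langle v\rangle$ is non-degenerate exactly when $Q(v)\neq 0$. Since $q$ is odd, each space in $X$ has a representative $v$ with $Q(v)=c$ for a fixed $c\in\mathbb{F}_q^\times$ representing the square class attached to $X$, and this $v$ is unique up to sign. For distinct $\langle v\rangle,\langle w\rangle\in X$ I would take normalized representatives $Q(v)=Q(w)=c$ and attach the invariant $(v,w)^2$, which is independent of the sign choices. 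Writing $\Delta_b$ for the set of unordered pairs of distinct spaces in $X$ with $(v,w)^2=b^2$, I would first check, exactly as in the unitary case, that each nonempty $\Delta_b$ is a single orbital: it is clearly $G$-invariant, and Witt's lemma (Theorem \ref{Witt}) applied to the $2$-space $\langle v,w\rangle$ with its two marked non-degenerate $1$-spaces shows $G$ is transitive on it.

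With this description the lower bound is immediate: for $n\geq 5$ there are at least two realizable values of $b^2$ (for instance $b=0$, giving orthogonal pairs, and a value yielding a non-degenerate $2$-space), so the rank of $G$ on $X$ is at least $3$. Thus $G$ is not $2$-transitive; as $|G|$ is even this precludes $2$-homogeneity, whence $\mathrm{diam}(X,G)\neq 1$ and so $\mathrm{diam}(X,G)\geq 2$.

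For the upper bound I would fix a non-diagonal orbital $\mathcal{O}=\Delta_b$ and bound its diameter by $2$. Any two distinct vertices lie in some $\Delta_a$; by transitivity of $G$ on $\Delta_a$ it suffices to exhibit, for one representative pair $\langle v\rangle,\langle w\rangle$ with $(v,w)=a$, a common neighbour $\langle u\rangle$, that is, a vector $u$ with $Q(u)=c$ and $(u,v)=(u,w)=b$. The two linear conditions cut out an affine subspace $u_0+\langle v,w\rangle^\perp$ (the functionals $(\cdot,v),(\cdot,w)$ are independent because $v,w$ are), and restricting $Q$ gives an inhomogeneous quadratic equation on it. Completing the square reduces the problem to representing a fixed scalar by the restriction of $Q$ to the non-degenerate part of $\langle v,w\rangle^\perp$. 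This space has dimension $n-2\geq 3$ when $\langle v,w\rangle$ is non-degenerate and non-degenerate part of dimension $n-3\geq 2$ otherwise; since a non-degenerate quadratic form over $\mathbb{F}_q$ in at least $2$ variables represents every nonzero scalar, and in at least $3$ variables is universal, such a $u$ exists. Hence every pair of vertices is at distance at most $2$ in $\mathcal{O}$, giving $\mathrm{diam}(X,G)\leq 2$, and with the lower bound $\mathrm{diam}(X,G)=2$.

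The main obstacle I anticipate lies in the degenerate configurations, namely when the source or target invariant satisfies $b=\pm 2c$, so that $\langle v,w\rangle$ is singular with a singular radical spanned by $v-w$. Here one must check that the linear term against the radical vanishes — which it does, since $(u_0,v-w)=b-b=0$ — so the reduction still lands in the $(n-3)$-dimensional non-degenerate part; one must also separate the genuinely non-diagonal orbital with $b^2=4c^2$ from the diagonal pairs $\langle w\rangle=\langle\pm v\rangle$ and ensure the constructed $u$ spans a $1$-space distinct from $\langle v\rangle$ and $\langle w\rangle$, which the extra room afforded by $n\geq 5$ permits. A secondary point is that, when $G$ is smaller than the full orthogonal group, the $G$-orbital refining $\Delta_b$ must still admit the common neighbour; this can be arranged using the sign freedom in $b$ together with the freedom in choosing $u$ within its class.
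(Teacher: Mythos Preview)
Your approach is correct and matches the paper's strategy at the structural level: both classify the non-diagonal orbitals via Witt's lemma by the invariant $\pm f(v,w)$ (equivalently your $(v,w)^2$), and both bound the diameter by $2$ by exhibiting a common $\Delta_b$-neighbour for an arbitrary pair.

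The difference lies only in how the common neighbour is produced. The paper simply writes down explicit vectors in a standard basis: with $v=e_1+f_1$ and $w=\alpha f_1+e_2+f_2$, the vector $u=\lambda f_1+\lambda f_2+e_3+f_3$ (or $\lambda f_1+\lambda f_2+x$ when $n=5$ or $(n,\epsilon)=(6,-)$) satisfies $Q(u)=1$ and $f(u,v)=f(u,w)=\lambda$. Your argument is instead the abstract one --- reduce to representing a scalar by $Q$ on the non-degenerate part of $\langle v,w\rangle^\perp$ and invoke universality in dimension $\geq 3$ (or representation of nonzero values in dimension $\geq 2$). Both routes are valid. The explicit construction sidesteps the degenerate configuration $a^2=4c^2$ you flag, since the chosen $v,w$ always span a non-degenerate plane; your approach explains conceptually why such vectors must exist and handles both square-classes of $X$ uniformly via the parameter $c$, whereas the paper tacitly normalises to $Q(u)=1$. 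The obstacles you anticipate are real but resolve exactly as you sketch: the sign freedom in $(u,v),(u,w)$ rescues the degenerate case, and the extra dimensions guarantee $\langle u\rangle\neq\langle v\rangle,\langle w\rangle$. You are also more explicit than the paper about the lower bound (rank $\geq 3$), which the paper leaves implicit in the orbital description.
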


\begin{proof}
The natural module $V = V_{n}(q)$ is an orthogonal space with a non-degenerate quadratic form $Q$ and associated bilinear form $f$. For a non-zero vector $v \in V$ such that $Q(v)$ is a square in $\mathbb{F}_{q}$ there are two vectors $u$ in $\langle v \rangle$ such that $Q(u) = 1$. Let us denote these vectors by $v_{0}$ and $-v_{0}$.  

Observe that for each $U \in X$ there exists $u \in U$ such that $Q(u) = 1$. Let $U$ and $U'$ be two distinct vertices in $X$. Let $u$ be a generator of $U$ and $u'$ a generator of $U'$. Consider the orbital graph $\mathcal{O} = \{ U, U' \}^{G}$ and the subspace $W = \langle u, u' \rangle = \langle u_{0}, u'_{0} \rangle$. Denote $f(u_{0},u'_{0})$ by $\alpha$. Observe that the conditions $Q(u_{0}) = Q(u'_{0}) = 1$, $f(u_{0},u'_{0}) = \alpha$ uniquely determine the isomorphism type of the space $W$. It follows by Theorem \ref{Witt} that the graph $\mathcal{O}$ is uniquely determined by the isomorphism type of $W$. On the other hand, the $2$-space $W$ is uniquely determined by $\pm \alpha$. Hence all orbitals are of the form $$\Delta_{\pm \alpha}=\{ \{ \langle v\rangle, \langle w \rangle \} \vert \langle v\rangle\neq \langle w \rangle, Q(v)=Q(w)=1, f(v,w)=\pm \alpha\}.$$ 
To prove that the diameter of each associated orbital graph is at most two, we need to show that for any two non-degenerate $1$-space, there is an element of $x$ that neighbours them both in $\Delta_{\pm \lambda}$. As all pairs $\{ \langle v\rangle, \langle w \rangle \}$ such that $Q(v)=Q(w)=1$ and $f(v,w)=\alpha$ are in the same orbit, it is sufficient to show that for two specific vectors with $Q(v)=Q(w)=1$ and $f(v,w)=\alpha$ there is $x$ such that $Q(x)=1$ and $f(x,w)=\pm \lambda$ and $f(v,x)=\pm \lambda.$ Let $v=e_1+f_1$ and $w=\alpha f_1+e_2+f_2.$ For $n=5$ and $(n,\epsilon)=(6,-)$ choosing $x=\lambda f_1+\lambda f_2+x$ works and for $n\geq 7$ or $(n,\epsilon)=(6,+)$ choosing $x=\lambda f_1+\lambda f_2+e_3+f_3$ works. 
\end{proof}

The following result concerns the case when $G_{0}$ is orthogonal, $q$ is even and $X$ is the set of non-singular $1$-spaces. 
\begin{lemma}
    Let $G_0$ be orthogonal, with $n\geq 8$ even,  $q$ even and $X$ the set of non-singular $1$-spaces. Then $\mathrm{diam}(X,G) = 2.$
\end{lemma}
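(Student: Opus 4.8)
The plan is to classify the orbitals explicitly, use this to see that the rank is at least $3$ (so that $\mathrm{diam}(X,G)\neq 1$), and then bound the diameter of every orbital graph by $2$ by producing common neighbours. Throughout write $f$ for the bilinear form associated with $Q$; since $q$ is even $f$ is alternating and symmetric, and since $n$ is even $f$ is non-degenerate. The first observation is that squaring $\alpha\mapsto\alpha^{2}$ is a bijection of $\mathbb{F}_q$, so each non-singular vector $v$ has a \emph{unique} scalar multiple $v_0$ with $Q(v_0)=1$. Hence each member of $X$ has a canonical generator with $Q=1$, and I identify $X$ with this set of generators. For distinct $\langle v\rangle,\langle w\rangle\in X$ with canonical generators $v,w$, the isometry type of the pair is determined by $Q(v)=Q(w)=1$ together with $\beta:=f(v,w)$, so by Witt's lemma (Theorem \ref{Witt}) the non-diagonal $G_0$-orbitals are exactly the non-empty sets $\Delta_\beta=\{\{\langle v\rangle,\langle w\rangle\}: Q(v)=Q(w)=1,\ f(v,w)=\beta\}$; a short construction shows that for $n\geq 4$ every $\beta\in\mathbb{F}_q$ occurs.

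For the lower bound I would observe that the condition $f(v,w)=0$, equivalently that $\langle v,w\rangle$ is totally isotropic for $f$, is preserved by $G_0$ and by all field automorphisms and similarities, hence by $G$. Since $q\geq 2$, both $\Delta_0$ and some $\Delta_\beta$ with $\beta\neq 0$ are non-empty and cannot fuse under $G$, so $G$ has at least two non-diagonal orbitals. Consequently no orbital graph is complete, and being connected by primitivity, each has diameter at least $2$; thus $\mathrm{diam}(X,G)\geq 2$.

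The substance is the upper bound. Because $G_0\trianglelefteq G$, every $G$-orbital graph is an edge-union of the graphs $\Delta_\beta$, and adding edges cannot increase the diameter, so it suffices to bound the diameter of each $\Delta_\beta$ by $2$. Fixing $\beta$ and two distinct vertices $\langle a\rangle,\langle b\rangle\in X$ with canonical generators $a,b$, I must find $x$ with $Q(x)=1$ and $f(a,x)=f(b,x)=\beta$ and $\langle x\rangle\notin\{\langle a\rangle,\langle b\rangle\}$: such an $x$ is a common neighbour, forcing the distance to be at most $2$. The two linear conditions cut out a coset $x_0+U$, where $U=\langle a,b\rangle^{\perp}$ has dimension $n-2\geq 6$, and on it $Q$ restricts to the inhomogeneous quadratic function $g(u)=Q(u)+f(x_0,u)+Q(x_0)$. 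I would choose inside $U$ a non-degenerate subspace $U'$ of (even) dimension $n-4\geq 4$; since $f|_{U'}$ is non-degenerate the linear term is a polar form, so for a suitable $z\in U'$ one has $g(u')=Q(u'+z)+\mathrm{const}$ for $u'\in U'$. As a non-degenerate quadratic form in dimension $\geq 3$ over a finite field is universal, $g$ already attains the value $1$ on $x_0+U'$. Finally the value $Q=1$ has on the order of $q^{\,n-3}$ solutions in the coset, vastly more than two, so $\langle x\rangle$ may be taken distinct from $\langle a\rangle$ and $\langle b\rangle$.

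The step I expect to be the main obstacle is the degenerate subcase $f(a,b)=0$, where the pair lies in $\Delta_0$. Here $\langle a,b\rangle$ is totally isotropic for $f$, so $U=\langle a,b\rangle^{\perp}$ carries a two-dimensional radical and $f|_U$ is degenerate; this is precisely why I pass to the non-degenerate complement $U'$ rather than arguing on all of $U$, and why I must confirm separately (using the abundance of solutions) that a neighbour distinct from $a$ and $b$ can be chosen when $\beta=0$. Once surjectivity of $g$ on $U'$ is established, every orbital graph has diameter at most $2$, and together with the lower bound this yields $\mathrm{diam}(X,G)=2$.
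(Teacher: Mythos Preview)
Your argument is correct and follows the same overall architecture as the paper's proof: both identify $X$ with the set of vectors of $Q$-value $1$ (using that squaring is bijective in characteristic $2$), both classify the $G_0$-orbitals as the sets $\Delta_\beta$ via Witt's lemma, and both establish the upper bound by producing, for an arbitrary pair $\langle a\rangle,\langle b\rangle$, a common $\Delta_\beta$-neighbour.

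The genuine difference lies in how the common neighbour is found. The paper fixes, for each value $\sigma=f(a,b)$, an explicit representative pair written in standard-basis coordinates, and then writes down a concrete vector $w_0$ (treating $\lambda=0$ and $\lambda\neq 0$ separately) that one checks by hand satisfies $Q(w_0)=1$ and $f(v_0,w_0)=f(x_0,w_0)=\lambda$; Witt's lemma then transports this to every pair. Your approach is coordinate-free: the two linear constraints cut out an affine subspace $x_0+U$ of codimension $2$, and you pass to a non-degenerate subspace $U'\subseteq U$ of dimension $n-4\geq 4$ (absorbing the possible $2$-dimensional radical when $f(a,b)=0$), complete the square using non-degeneracy of $f|_{U'}$, and invoke the universality of non-degenerate quadratic forms in dimension $\geq 3$ over finite fields. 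Your method is more conceptual and avoids case analysis; the paper's method is more elementary and self-contained, requiring no appeal to the theory of quadratic forms. You also make the lower bound $\mathrm{diam}(X,G)\geq 2$ explicit by noting that $\Delta_0$ cannot fuse with any $\Delta_\beta$, $\beta\neq 0$, under similarities or field automorphisms, whereas the paper leaves this implicit in the orbital classification. Finally, your observation that for $\beta\neq 0$ one has $a,b\notin x_0+U$ automatically, and that for $\beta=0$ the choice of $U'$ as a complement to the radical already excludes $a,b$, handles the distinctness condition $\langle x\rangle\notin\{\langle a\rangle,\langle b\rangle\}$ more cleanly than a raw count.
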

\begin{proof}
    The action of $G$ on $X$ is equivalent to on the set $X'$ of non-singular vectors $v$ such that $Q(v)=1.$ We can see using Witt's Lemma (see Theorem \ref{Witt}) that the orbitals are of the form $$\Delta_\alpha=\{\{a,b\}\vert a,b\in X', ( a,b)=\alpha \}$$ with $\alpha\in \mathbb{F}_q.$ Note that $Q(a+b)=( a,b).$
    Consider the orbital graph $\Delta_\lambda,$ $\lambda
\in \mathbb{F}_q.$ 

Fix $x_0,v_0 \in X'.$ Let $Q(v_0+x_0)=\sigma\in \mathbf{F}_q.$ Assume there is $w_0\in X'$ such that $Q(v_0+w_0)=Q(w_0+x_0)=\lambda.$ Let $x,v\in X'$ arbitrary such that $Q(x+v)=\sigma.$ Now there exists $g\in G$ such that $(v_0,x_0)^g=(v,x),$ so $Q(v+w_0^g)=Q(w_0^g+x)=\lambda,$ so $w_0^g$ is a common neighbour of $v$ and $x.$

For $\lambda=0$ choose $v_0=f_2+e_2+e_1$ and $x_0=f_2+e_2+\sigma f_1.$ Then for $w_0=e_2+f_2$ we have  $Q(v_0+w_0)=Q(w_0+x_0)=\lambda,$ as required.
For $\lambda\neq 0,$ choose $v_0=e_1+e_2+e_3+f_3$ and $x_0=(\sigma-\lambda)f_1+e_3+f_3+\lambda f_2.$ Then for $w_0=e_2+e_3+f_3+\lambda f_1$ we have $Q(v_0+w_0)=Q(w_0+x_0)=\lambda,$ as required.
\end{proof}




Now consider the case when $\dim(U)=2$ where $U \in X$. 

\begin{lemma}
Let $n\geq 4$. Let $G_0$ be $\mathrm{PSp}_n(q)$, $\mathrm{PSU}_n(q)$ or $\mathrm{P\Omega}_n^\epsilon(q)$ provided that, in the latter case, $(n,\epsilon)\neq (4,-)$. Let $X$ be the set of  those non-degenerate subspace of $V$ which have dimension $2$. Furthermore, assume that each subspace in $X$ is of type $\mathrm{O}_{2}^{+}$ when $G_{0} = \mathrm{P\Omega}_n^\epsilon(q)$. Then $\mathrm{diam}(X,G)\geq 3.$ 
\end{lemma}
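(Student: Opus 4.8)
The plan is to exhibit a single orbital graph $\mathcal{O}$ on $X$ together with two vertices at distance at least $3$; since $\mathrm{diam}(X,G)$ is the supremum of the diameters of the orbital graphs, this already gives $\mathrm{diam}(X,G)\ge 3$. The geometric observation driving everything is that a subspace of the prescribed type (type $\mathrm{O}_2^+$ in the orthogonal case, and an arbitrary non-degenerate $2$-space in the symplectic and unitary cases) always contains isotropic, respectively singular, vectors, whereas two such subspaces can be placed perpendicular to one another. The orbital I would use is the one whose edges join subspaces meeting in a totally singular (isotropic) line; the point of insisting on a singular line is that the competing ``non-singular line'' orbital does admit common neighbours for the pair below, so it is essential to isolate the correct orbital.

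First I would choose $U,U''\in X$ with $U''\subseteq U^{\perp}$. This is possible exactly when $V$ contains a perpendicular pair of non-degenerate $2$-spaces of the relevant type, i.e.\ when the Witt index of $V$ is at least $2$: for $\mathrm{Sp}_n(q)$ and $\mathrm{SU}_n(q)$ this holds for all $n\ge 4$, and in the orthogonal case $\mathrm{O}_2^+\perp\mathrm{O}_2^+$ embeds in $V$ for every $(n,\epsilon)$ with $n\ge 4$ except $(n,\epsilon)=(4,-)$, which is precisely the excluded case since an $\mathrm{O}_4^-$ space has Witt index $1$ and cannot contain two perpendicular hyperbolic planes. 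As $U$ is non-degenerate we get $U\cap U''\subseteq U\cap U^{\perp}=\{0\}$, so $\dim(U\cap U'')=0$ and in particular $U\ne U''$; moreover both $U$ and $U''$ lie in the single orbit $X$ by Witt's lemma (Theorem \ref{Witt}). I would fix $\mathcal{O}$ to be the $G$-orbit of some pair $\{A,B\}$ in $X$ meeting in a totally singular line, which exists because, e.g.\ in the orthogonal case the planes $\langle e_1,f_1\rangle$ and $\langle e_1,\,f_1+e_2\rangle$ meet exactly in the singular line $\langle e_1\rangle$ (in the symplectic case every $1$-space is already singular, and the unitary case is analogous). Since the property ``$\dim(A\cap B)=1$ with $A\cap B$ totally singular'' is $G$-invariant, every edge of $\mathcal{O}$ joins two subspaces meeting in a singular line.

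The crux will be to show that $U$ and $U''$ have no common neighbour in $\mathcal{O}$. Suppose $W\in X$ were adjacent in $\mathcal{O}$ to both. Then $U\cap W=\langle v_1\rangle$ and $U''\cap W=\langle v_2\rangle$ are totally singular lines, so $v_1$ and $v_2$ are singular. Because $U\cap U''=\{0\}$ these lines are distinct, whence $W=\langle v_1,v_2\rangle$. But $v_1\in U$ and $v_2\in U''\subseteq U^{\perp}$ force $v_1\perp v_2$, so the Gram matrix of $W$ with respect to $v_1,v_2$ is identically zero and $W$ is totally singular, contradicting the non-degeneracy of $W$. Hence no such $W$ exists.

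Finally, $\mathcal{O}$ is connected because $G$ is primitive (Higman's criterion), so $U$ and $U''$ are joined by a path; as $\dim(U\cap U'')=0$ they are non-adjacent, and by the previous paragraph they have no common neighbour, so $d(U,U'')\ge 3$ and therefore $\mathrm{diam}(X,G)\ge 3$. I expect the main obstacle to be the uniform bookkeeping across the three families: verifying that the singular-line relation really defines a (non-diagonal) orbital in each geometry, checking that the chosen perpendicular pair can be realised in the prescribed type, and confirming that the only configuration where two perpendicular planes fail to exist is the excluded $(n,\epsilon)=(4,-)$.
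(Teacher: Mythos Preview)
Your proposal is correct and follows essentially the same approach as the paper: you pick two perpendicular hyperbolic planes $U,U''$ (the paper uses $\langle e_1,f_1\rangle$ and $\langle e_2,f_2\rangle$), consider the orbital through a pair meeting in a singular $1$-space, and derive the same contradiction that a putative common neighbour $W=\langle v_1,v_2\rangle$ would have to be totally singular. Your added commentary on why $(n,\epsilon)=(4,-)$ must be excluded is a nice clarification the paper leaves implicit.
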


\begin{proof}
It is sufficient to exhibit an orbital graph with diameter at least $3$. Let $\mathcal{B}$ be a symplectic, a unitary or a standard basis and let $e_{1}$, $e_{2}$, $f_{1}$, $f_{2}$ as defined in Section 2. Let $U = \langle e_{1}, f_{1} \rangle$ and $U' = \langle e_{1} + e_{2}, f_{1} \rangle$. These are vertices in $X$ with intersection $\langle f_{1} \rangle$, a singular space of dimension $1$. Consider the orbital graph $\mathcal{O}=\{U,U'\}^G$. Let $U'' =\langle e_2,f_2\rangle.$ We will show that $d(U,U'')\geq 3$. Clearly, $U$ and $U''$ are not adjacent in $\mathcal{O}$. Assume for a contradiction that there is a vertex $W$ in $X$ such that $W$ is adjacent to both $U$ and $U''$. In this case $U\cap W=\langle u \rangle$ and $U''\cap W=\langle v \rangle$ are singular $1$-spaces where $u$ and $v$ are non-zero vectors in $V$. Since $U \cap U'' = 0$, the $1$-spaces $\langle u \rangle$ and $\langle v \rangle$ are distinct and so $W = \langle u, v \rangle$. On the other hand, since $U$ and $U''$ are perpendicular, $u$ is orthogonal to $v$, and so $W$ is totally singular. This is a contradiction to the fact that $W$ is non-degenerate. 
\end{proof}

We continue this section by proving that for $G_{0} = \mathrm{P\Omega}_n^\epsilon(q)$ such that $q\equiv 1\mod 4$ and $X$ is of type $\mathrm{O}_{2}^{-},$ the orbital diameter is at least $3$.  
\begin{lemma}
   Let $n\geq 7.$ Let $G_0$ be $\mathrm{P\Omega}_n^\epsilon(q)$ such that $q\equiv 1\mod 4$ and $X$ be the set of subspaces of type $\mathrm{O}_{2}^{-}.$ Then $\mathrm{diam}(X,G)\geq 3.$ 
\end{lemma}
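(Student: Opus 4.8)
The plan is to exhibit a single orbital graph $\mathcal{O}$ on $X$ in which two vertices lie at distance at least $3$. The strategy mirrors the preceding $\mathrm{O}_2^+$ lemma, but the ``no common neighbour'' obstruction is now a parity statement about square classes of discriminants, and it works precisely because $q \equiv 1 \pmod 4$ makes $-1$ a square in $\mathbb{F}_q$ (so that a perpendicular sum $\langle u \rangle \perp \langle v \rangle$ of two non-singular lines is of type $\mathrm{O}_2^-$ if and only if $Q(u)Q(v)$ is a non-square).

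The first task is to isolate the correct orbital by a genuinely $G$-invariant quantity. For any edge $\{P,P'\}$ of an orbital whose two vertices meet in a non-singular line $L = P \cap P'$, set $R(P,P') = (P+P') \cap L^{\perp}$, so that $P+P' = L \perp R(P,P')$ is non-degenerate and $R(P,P')$ is a non-degenerate $2$-space. Since every $g \in G$ preserves perpendicularity, spans, intersections and anisotropy, the $\mathrm{O}_2^{\pm}$-type of $R(P,P')$ is constant along each orbital; this invariant, unlike the square class of the line $P \cap P'$ itself, survives similarities of non-square ratio. I would choose $U,U'$ so that this invariant is $\mathrm{O}_2^-$: fix a non-singular line $\langle c \rangle$ with $Q(c)$ a square, an anisotropic $\mathrm{O}_2^-$-plane $R \perp \langle c \rangle$ inside $V$, and two distinct lines $\langle p \rangle,\langle p' \rangle$ of $R$ with $Q(p),Q(p')$ equal to a fixed non-square $\mu$ (an anisotropic plane represents every non-zero scalar as a norm, and the $q+1$ vectors of norm $\mu$ span at least $(q+1)/2 \geq 2$ distinct lines, so such $p,p'$ exist). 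Put $U = \langle c \rangle \perp \langle p \rangle$ and $U' = \langle c \rangle \perp \langle p' \rangle$; then $Q(c)Q(p)$ and $Q(c)Q(p')$ are non-squares, so $U,U'$ are of type $\mathrm{O}_2^-$, while $\langle p\rangle \neq \langle p'\rangle$ forces $U \cap U' = \langle c \rangle$ and $R(U,U') = R$ of type $\mathrm{O}_2^-$. Set $\mathcal{O} = \{U,U'\}^G$ (all three planes lie in the single orbit $X$ by Lemma \ref{orbit}). Finally take $U''$ to be any $\mathrm{O}_2^-$-plane inside $U^{\perp}$, available because $\dim U^{\perp} = n-2 \geq 5$; then $U \perp U''$ and, $U$ being non-degenerate, $U \cap U'' = 0$. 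As $n \geq 7$ forces Witt index at least $3$, every vector used above is present.

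The heart of the argument is to rule out a common neighbour. Suppose $W \in X$ were adjacent in $\mathcal{O}$ to both $U$ and $U''$. Every edge of $\mathcal{O}$ joins two planes meeting in a line, so $W \cap U = \langle a \rangle$ and $W \cap U'' = \langle b \rangle$ are lines; since $U \cap U'' = 0$ these force $W = \langle a,b \rangle$, and since $U \perp U''$ we get $a \perp b$, whence $W = \langle a \rangle \perp \langle b \rangle$. Writing $U = \langle a \rangle \perp \langle a' \rangle$, the transverse plane of the edge $\{U,W\}$ is $R(U,W) = \langle a',b \rangle$; as $a' \in U$ and $b \in U''$ with $U \perp U''$ we have $a' \perp b$, so $R(U,W) = \langle a' \rangle \perp \langle b \rangle$, and by invariance it is again of type $\mathrm{O}_2^-$. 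Translating the three plane-types into square classes and using that $-1$ is a square, the hypotheses ``$U$ is $\mathrm{O}_2^-$'', ``$W$ is $\mathrm{O}_2^-$'' and ``$R(U,W)$ is $\mathrm{O}_2^-$'' say exactly that $Q(a)Q(a')$, $Q(a)Q(b)$ and $Q(a')Q(b)$ are all non-squares. Multiplying the first two gives $Q(a)^2\,Q(a')Q(b)$ a square, hence $Q(a')Q(b)$ is a square, contradicting the third. So no common neighbour exists; combined with $U \cap U'' = 0$ (which prevents $U,U''$ from being adjacent), this yields $d(U,U'') \geq 3$, and therefore $\mathrm{diam}(X,G) \geq 3$.

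The main obstacle is precisely the one navigated at the start: the intuitive invariant ``square class of the intersection line'' is \emph{not} preserved by similarities of non-square ratio, so under the largest admissible $G$ it fails to separate orbitals; passing instead to the $\mathrm{O}_2^{\pm}$-type of the transverse plane $R(P,P')$ is what makes the parity computation rigorous, and it is this shift that forces the choice of the minus-type, rather than plus-type, orbital (for an $R$ of plus type the three conditions are mutually consistent and yield no contradiction). It is worth recording that the whole computation collapses when $q \equiv 3 \pmod 4$: there $-1$ is a non-square, each ``$\mathrm{O}_2^-$'' condition reverses to ``$Q(\cdot)Q(\cdot)$ is a square'', and the three conditions become compatible, so no contradiction arises. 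This is exactly the borderline reflected in part (2) of Theorem \ref{thm2} and the accompanying remark.
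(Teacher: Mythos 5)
Your proof is correct, and while the overall strategy coincides with the paper's --- pick one orbital whose defining edge joins two minus-type planes meeting in a non-singular line, take a minus-type plane $U''\subseteq U^{\perp}$ (so $U\cap U''=0$), and show that a common neighbour $W$ would be forced to contain a singular vector because $-1$ is a square in $\mathbb{F}_q$ --- the invariant you use to constrain the edges of the orbital is genuinely different, and the difference buys something real. The paper works with explicit coordinates and disposes of the common neighbour by asserting that, ``without loss of generality'', the lines $U\cap W$ and $U''\cap W$ have generators $u,w$ with $Q(u)=Q(w)=1$, after which $u+\sigma w$ (with $\sigma^2=-1$) is singular in $W$. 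That WLOG amounts to saying that the square class of $Q$ on the intersection line is an orbital invariant, which is clear only when every element of $G$ induces a similarity of square ratio; this is automatic for $n$ odd, but for $n$ even the similarity group of the form attains every ratio in $\mathbb{F}_q^{*}$, so for the largest admissible $G$ the paper's step is not justified as written (indeed, $W$ of type $\mathrm{O}_2^-$ together with $u\perp w$ forces $Q(u)Q(w)$ to be a non-square, so under such a $G$ the hypothesized neighbour would have intersection lines in \emph{different} square classes and the paper's computation never gets started). Your invariant --- the $\mathrm{O}_2^{\pm}$-type of the transverse plane $R(P,P')=(P+P')\cap(P\cap P')^{\perp}$ --- is preserved by all semisimilarities regardless of ratio, so your three-way square-class contradiction works uniformly for every admissible $G$. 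The price is that the defining edge must be built so that $R(U,U')$ is anisotropic, which you arrange; the paper's edge does not control this invariant (for instance $q=13$, $\zeta=2$ gives it plus type), so your orbital is genuinely different from, not a coordinate-free paraphrase of, the paper's, and the paper's configuration can only be salvaged for the largest $G$ by a finer invariant than either proof uses, such as $f(p,p')^{2}/\bigl(Q(p)Q(p')\bigr)$ formed from the two orthogonal complements of the intersection line, which vanishes for any would-be common neighbour but not for the paper's edge. In short: the paper's proof buys brevity and very concrete vectors; yours buys insensitivity to where $G$ sits between $\mathrm{P\Omega}_n^{\epsilon}(q)$ and the full projective semisimilarity group. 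Two tiny points: the existence of an anisotropic plane inside the non-degenerate space $U^{\perp}$ of dimension at least $3$ deserves one explicit line (take orthogonal anisotropic vectors whose norms have non-square product, using universality of non-degenerate forms in dimension at least $2$), and your general assertion that $R(P,P')$ is always non-degenerate is not true for arbitrary edges (tangent configurations exist), though it is harmless here since for your orbital it follows by invariance from the defining edge. Your closing observation on the collapse at $q\equiv 3\pmod 4$ is correct and matches the Remark following Theorem \ref{thm2}.
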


\begin{proof}
    It is sufficient to exhibit an orbital graph with diameter at least 3. 
    Let $e_{1}$, $e_{2}$, $e_{3},$ $e_{4},$ $f_{1}$, $f_{2},$ $f_{3}$, $f_{4}$ and $x$ as defined in Section 2, and let $\zeta\in \mathbb{F}_q$ such that $x^2+x+1$ is irreducible. Let $U = \langle e_{1}+f_{1}, e_{1}+\zeta e_{2}+f_{2} \rangle$ and $U' = \langle e_{1} + f_{1}, e_{1}+\zeta e_{3}+f_{3} \rangle$. Since both $U$ and $U'$ are spanned by two vectors whose quadratic forms are $1$ and $\zeta,$ respectively, and $(e_{1}+f_{1}, e_{1}+\zeta e_{2}+f_{2})=(e_{1} + f_{1}, e_{1}+\zeta e_{3}+f_{3})=1,$ they are of type $\mathrm{O}_2^-.$ These are vertices in $X$ with intersection $\langle e_{1}+f_{1} \rangle$, a $1$-space containing a vector with quadratic form equal to 1. Consider the orbital graph $\mathcal{O}=\{U,U'\}^G$. Let $U'' =\langle \zeta e_3+f_3,e_3+v\rangle,$ where $v=x$ for $n=7$ and $\mathrm{P\Omega}_8^-(q)$ and $v=e_4+f_4$ otherwise. We will show that $d(U,U'')\geq 3$. Clearly, $U$ and $U''$ are not adjacent in $\mathcal{O}$. Assume for a contradiction that there is a vertex $W$ in $X$ such that $W$ is adjacent to both $U$ and $U''$. In this case, without loss of generality, $U\cap W=\langle u \rangle$ and $U''\cap W=\langle w \rangle$ where $Q(u)=1$ and $Q(w)=1.$ Since $U \cap U'' = 0$, the $1$-spaces $\langle u \rangle$ and $\langle w \rangle$ are distinct and so $W = \langle u, w \rangle$. Let $\sigma\in \mathbb{F}_q$ such that $\sigma^2=-1.$ Since $q\equiv 1 \mod 4,$ we know that that $-1$ is a square, so such $\sigma$ exists. We have that $u+\sigma w\in W$ and since since $U$ and $U''$ are perpendicular, $f(u,w)=0$ and so we have $Q(u+\sigma w)=1+\sigma^2=0,$ so $W$ is of type $\mathrm{O}_2^+,$ as a 2-space of type $\mathrm{O}_2^-$ does not contain a totally singular $1$-space. This is a contradiction.  
\end{proof}


We finish the proof of Theorem \ref{thm2} by considering pairs $(X,G)$ as in (c) of Definition \ref{d1}. We show that in this case the orbital diameter is at least $3$. 

\subsection{Case (c)} Let $(X,G)$ be as in (c) of Definition \ref{d1}. In this case $G_{0} = \mathrm{PSL}_{n}(q)$, the group $G$ contains a graph automorphism $\sigma$ of $G_0$, and $X$ is an orbit of pairs $\{ U, W \}$ of subspaces of $V = V_{n}(q)$, where either $U \subseteq W$ or $V = U \oplus W$, and $\dim U = t$, $\dim W = n-t$. We may suppose that $k=t$. The possibilities are $t=1$ or $t=2$ by Theorem \ref{thm1}. Let $n \geq 8$. The automorphism $\sigma$ defines a bijection between the set of $t$-dimensional subspaces of $V$ to the set of $(n-t)$-dimensional subspaces of $V$. Let $\{ e_{1}, \ldots , e_{n} \}$ be a basis for $V$. Let $U = \langle  e_{1}, \ldots , e_{t} \rangle$. Let $W_1$ be the subspace  $\langle  e_{1}, \ldots , e_{n-t} \rangle$ in the first case and let $W_1$ be the subspace $\langle  e_{t+1}, \ldots , e_{n} \rangle$ in the second case. Let $W_2$ be the subspace  $\langle  e_{1}, \ldots , e_{n-t-1}, e_{n-t+1} \rangle$ in the first case and let $W_2$ be the subspace $\langle  e_{t+1} + e_{1}, e_{t+2}, \ldots , e_{n} \rangle$ in the second case. Consider the orbital graph $\Gamma = \{ \{  U, W_1 \}, \{ U, W_2 \} \}^{G}$. Observe that if $\{  \{ U_{1}, W_{3}  \}, \{ U_{2}, W_{4}  \} \}$ is an edge in $\Gamma$, then $\{ U_{1}, W_{3} \} \cap \{ U_{2}, W_{4} \} \not= \emptyset$. Let $\pi \in G_0$ be the element which permutes the basis vectors in such a way that $e_1$ is taken to $e_2$, $e_2$ is taken to $e_n$, $e_n$ is taken to $e_1$ and all other basis vectors are fixed. It is easy to see that $\{ U^{\pi}, W_{1}^{\pi} \} \in X$. The pair $\{   \{  U, W_1 \}, \{  U^{\pi}, W_{1}^{\pi}  \}   \}$ is not an edge in $\Gamma$ since $\{ U, W_1 \}  \cap \{  U^{\pi}, W_{1}^{\pi} \} = \emptyset$. Assume for a contradiction that the diameter of $\Gamma$ is $2$. Let $\{ U', W' \} \in X$ be a vertex adjacent to both $\{  U, W_1 \}$ and $\{  U^{\pi}, W_{1}^{\pi}  \}$ with $\dim U' = t$ and $\dim W' = n-t$. There are two cases: (i) $U' = U$ and $W' = W_{1}^{\pi}$ or (ii) $W' = W_1$ and $U' = U^{\pi}$. In both cases we arrive to the contradiction that $\{  U', W'  \}  \not\in X$.


\end{document}